\theoremstyle{plain}
\newtheorem{theorem}{Theorem}[section]
\newtheorem{lemma}{Lemma}[section]
\newtheorem{proposition}{Proposition}[section]
\theoremstyle{definition}
\newtheorem{definition}{Definition}[section]
\newtheorem{example}{Example}
\theoremstyle{remark}
\numberwithin{equation}{section}
\numberwithin{figure}{section}
\numberwithin{table}{section}
\title[Persistence Minimal Free Lie Model]{Persistence Minimal Free Lie Model}
\author[Siheng Yi]{Siheng Yi}
\thanks{${}^{*}$Siheng Yi (E-mail: \texttt{12131237@mail.sustech.edu.cn})}
\thanks{${}^{}$Department of Mathematics, Southern University of Science and Technology}
\date{\today}
\subjclass[2020]{55N31, 55P62}
\keywords{persistence modules, interleaving distance, persistence minimal free Lie models}
\begin{document}

\begin{abstract}
  The minimal Quillen model is a free Lie model for rational spaces proposed by Quillen. 
  Meanwhile, persistence modules are theoretical abstractions of persistent homology. 
  In this paper, we integrate the ideas of rational homotopy theory and persistence modules to construct the persistence minimal Quillen model and discuss its stability. 
  Our results provide a new algebraic framework for topological data analysis, which is more refined compared to directly computing the homology groups of the filtration of simplicial complexes. 
  Furthermore, the stability results for persistence minimal Lie models ensure that our model is well-founded. 
\end{abstract}

\maketitle

\section{Introduction}
\subsection{background}
Persistence modules\cite{zomorodian2004computing, chazal2016structure, oudot2015persistence}, as algebraic structures encoding the evolution of topological features across scales, have become central to the mathematical framework of topological data analysis(TDA)\cite{edelsbrunner2002topological, zomorodian2004computing, carlsson2009topology, cohen2005stability}. 
Their development, applications, and theoretical richness bridge pure mathematics, computational topology, and data science. 

Rational homotopy theory is a branch of algebraic topology that studies the homotopy-theoretic properties of spaces by rationalizing their homotopy groups and focusing on computational methods. 
It simplifies ordinary homotopy theory by discarding torsion information and retaining only rational coefficients, enabling explicit calculations for spaces like simply connected finite CW complexes. 
A key innovation in this field is Daniel Quillen's introduction of free Lie models, constructed through differential graded free Lie algebras, which encode the rational homotopy type of a space via its differential forms and Lie bracket. 

In persistent homology, if we have already determined the method for constructing simplicial complexes from point clouds, then the remaining issue is to establish algebraic models for these simplicial complexes. 
Currently, the most frequently used algebraic model is the homology groups over some coefficient field $\Bbbk$ for simplicial complexes. 
When we specify the coefficient field to be a field of characteristic 0, we can employ rational homotopy theory to establish a more refined algebraic model for simplicial complexes. 

In this paper, we first introduce our main results in Section 1. 
In Section 2, we will review some basic knowledge about the rational homotopy theory and persistence modules. 
Finally, we will fully elaborate on our results and proof in Section 3. 

\subsection{Main Results}
Before presenting our results, we need to establish some notation. 
\begin{itemize}
    \item $\textbf{DGL}$ is the category of connected differential graded Lie algebras over $\mathbb{Q}$,
    \item \textbf{Ho}($\textbf{DGL}$) is the homotopy category of $\textbf{DGL}$,
    \item $\textbf{Vec}$ is the category of finite-dimensional vector spaces over $\mathbb{Q}$,
    \item $\textbf{grVec}$ is the category of graded finite-dimensional vector spaces over $\mathbb{Q}$,
    \item $\textbf{Top}$ is the category of simply-connected rational spaces of finite type,
    \item $d_{HI}$ is the homotopy interleaving distance\cite{blumberg2023universality}. 
\end{itemize}

In this paper, we define persistence minimal Quillen models, which are persistence minimal free Lie models for rational $\mathbb{R}$-spaces, where $\textbf{Top} $ is the category of simply connected rational spaces of finite type and a rational $\mathbb{R}$-space is a functor $(\mathbb{R},\leq) \to \textbf{Top} $. 
Meanwhile, we prove the existence of persistence minimal Quillen models, that is
\vspace{0.3cm}
\begin{theorem}
    For any rational $\mathbb{R}$-space $\mathbb{X}: (\mathbb{R},\leq) \to \textbf{Top} $, there exists a persistence minimal Quillen model $M_{Qui}(\mathbb{X}):(\mathbb{R},\leq) \to \textbf{Ho}(\textbf{DGL})$ such that $M_{Qui}(\mathbb{X})_t$ is a minimal Quillen model of $\mathbb{X}_r$ and $M_{Qui}(\mathbb{X})(s\leq t)$ is a Lie representative of $\mathbb{X}(s\leq t)$ up to weak equivalences. 
\end{theorem}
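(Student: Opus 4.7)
The plan is to build the persistence minimal Quillen model pointwise and then promote the assignment to a functor using the functoriality of Quillen's equivalence between rational simply connected spaces and $\textbf{Ho}(\textbf{DGL})$.

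First, I would invoke the classical existence theorem of Quillen at each parameter. For every $t \in \mathbb{R}$, the object $\mathbb{X}_t$ lies in $\textbf{Top}$, so it is a simply connected rational space of finite type and admits a minimal free Lie model. I would set $M_{Qui}(\mathbb{X})_t$ to be a choice of such a minimal Quillen model; note that minimality together with the standard uniqueness results guarantees that this choice is unique up to isomorphism in $\textbf{DGL}$, hence canonical in $\textbf{Ho}(\textbf{DGL})$.

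Second, I would treat the structure morphisms. For each $s \leq t$, the map $\mathbb{X}(s \leq t): \mathbb{X}_s \to \mathbb{X}_t$ is a morphism in $\textbf{Top}$. Quillen's functor $\lambda: \textbf{Top} \to \textbf{DGL}$ descends to an equivalence $\overline{\lambda}:\textbf{Ho}(\textbf{Top}) \to \textbf{Ho}(\textbf{DGL})$ after localization at weak equivalences. I would define $M_{Qui}(\mathbb{X})(s\leq t)$ as the morphism in $\textbf{Ho}(\textbf{DGL})$ obtained by transporting $\mathbb{X}(s\leq t)$ through $\overline{\lambda}$, using isomorphisms in $\textbf{Ho}(\textbf{DGL})$ to identify $\overline{\lambda}(\mathbb{X}_r)$ with the chosen minimal models $M_{Qui}(\mathbb{X})_r$ at the endpoints. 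This yields a morphism that is, by construction, a Lie representative of $\mathbb{X}(s\leq t)$ up to weak equivalence.

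Third, I would verify the functoriality of the assignment $(\mathbb{R},\leq) \to \textbf{Ho}(\textbf{DGL})$. Identities are preserved because the identity on $\mathbb{X}_t$ is sent to the identity in $\textbf{Ho}(\textbf{DGL})$. Composition is preserved because $\mathbb{X}: (\mathbb{R},\leq) \to \textbf{Top}$ is a functor, $\overline{\lambda}$ is a functor between homotopy categories, and the identifications between $\overline{\lambda}(\mathbb{X}_t)$ and the minimal representative $M_{Qui}(\mathbb{X})_t$ are coherent isomorphisms in $\textbf{Ho}(\textbf{DGL})$. Together these combine into the desired functor $M_{Qui}(\mathbb{X}): (\mathbb{R},\leq) \to \textbf{Ho}(\textbf{DGL})$.

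The main obstacle, and where care is needed, is ensuring that the choices of minimal models at each parameter $t \in \mathbb{R}$ can be glued into a strictly functorial assignment rather than one only defined up to coherent isomorphism. Since the target lives in the homotopy category $\textbf{Ho}(\textbf{DGL})$ and morphisms there are already isomorphism classes, this subtlety is bypassed: one appeals to the axiom of choice to pick a representative minimal Lie model for each $t$, then transports the structure morphisms into $\textbf{Ho}(\textbf{DGL})$ and uses the uniqueness of minimal models (together with Quillen's equivalence) to conclude that the resulting triangles commute in $\textbf{Ho}(\textbf{DGL})$.
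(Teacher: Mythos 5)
Your argument is correct, but it reaches the conclusion by a different mechanism than the paper. You construct the functor by conjugation: take $\overline{\lambda}\circ\mathbb{X}$, which is a functor $(\mathbb{R},\leq)\to\textbf{Ho}(\textbf{DGL})$ on the nose, and transport it along a chosen isomorphism $\overline{\lambda}(\mathbb{X}_t)\cong M_{Qui}(\mathbb{X})_t$ for each $t$ (these exist because the minimal model is quasi-isomorphic to $\lambda\mathbb{X}_t$ and quasi-isomorphisms invert in the homotopy category). Functoriality is then automatic --- conjugating a functor by any family of isomorphisms yields a functor, with no coherence condition needed. The paper instead works on the Sullivan side: it proves a lemma that the Lie representative $n_f$ of a map $f$ is unique in $\textbf{Ho}(\textbf{DGL})$, by passing through $C^*$ and the uniqueness up to homotopy of Sullivan representatives, and deduces $n_g\circ n_f=n_{gf}$ from that uniqueness; functoriality of $M_{Qui}(\mathbb{X})$ then follows. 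Your route is cleaner and more categorical, but it leaves one point implicit that the paper's route handles by definition: you must check that the morphism obtained by transporting $\mathbb{X}(s\leq t)$ through $\overline{\lambda}$ really is a \emph{Lie representative} in the sense used in the paper, i.e.\ a $dgl$ morphism $\varphi$ with $n_{\mathbb{X}_s}C^*(\varphi)\sim A_{PL}(\mathbb{X}(s\leq t))\,n_{\mathbb{X}_t}$. This requires the compatibility of Quillen's functor $\lambda$ with the $C^*$/$A_{PL}$ description of Lie models --- a standard fact underlying Quillen's equivalence, but one you should cite rather than assume. With that reference added, your proof is complete and arguably more transparent about why composition holds.
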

\vspace{0.3cm}

Persistence minimal Quillen models 
As persistence minimal Quillen models, which is a type of persistence modules, we can consider its stability results, and thus we have proven the following theorem.
\vspace{0.3cm}
\begin{theorem}\label{12}
    For any rational $\mathbb{R}$-spaces $\mathbb{X}$ and $\mathbb{Y}$, we have 
    \begin{itemize}
        \item $d_{I}^{\textbf{Ho}(\textbf{DGL})}(M_{Qui}(\mathbb{X}),M_{Qui}(\mathbb{Y})) \leq d_{HI}(\mathbb{X},\mathbb{Y}) \leq d_{I}(\mathbb{X}, \mathbb{Y})$
        \item $\begin{array}{ll}
        d_I^{\textbf{grVec}}(\pi_*(\mathbb{X}), \pi_*(\mathbb{Y}) )&=d_I^{\textbf{grVec}}(H_* \circ M_{Qui}(\mathbb{X}),H_* \circ M_{Qui}(\mathbb{Y})) \\
        &\leq d_{I}^{\textbf{Ho}(\textbf{DGL})}(M_{Qui}(\mathbb{X}),M_{Qui}(\mathbb{Y}))
        \end{array}$
        \item $d_I^{\textbf{grVec}}(H_*(\mathbb{X}), H_*(\mathbb{Y}) )=d_I^{\textbf{grVec}}(\mathbb{V},\mathbb{W}) \leq d_{I}^{\textbf{Ho}(\textbf{DGL})}(M_{Qui}(\mathbb{X}),M_{Qui}(\mathbb{Y}))$
    \end{itemize}
\end{theorem}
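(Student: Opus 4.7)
The plan is to reduce everything to two general principles: (i) any functor $F : \mathcal{C} \to \mathcal{D}$ is $1$-Lipschitz for interleaving distances, since applying $F$ componentwise to an $\varepsilon$-interleaving of $\mathbb{X}$ and $\mathbb{Y}$ yields an $\varepsilon$-interleaving of $F \circ \mathbb{X}$ and $F \circ \mathbb{Y}$; and (ii) a natural isomorphism of persistence objects forces equality of their interleaving distances. These two principles, combined with the structural properties of $M_{Qui}$ established in Theorem~1.1, will produce all three bullets.

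For the first bullet, the right-hand inequality $d_{HI} \leq d_I$ is immediate from the definition of the homotopy interleaving distance, since the identity is a levelwise weak equivalence and hence the infimum defining $d_{HI}$ is bounded above by $d_I$. For the left-hand inequality, I take any pair $(\mathbb{X}', \mathbb{Y}')$ of rational $\mathbb{R}$-spaces levelwise weakly equivalent to $(\mathbb{X}, \mathbb{Y})$ and admitting an $\varepsilon$-interleaving in $\textbf{Top}$. Applying $M_{Qui}$ componentwise sends these levelwise weak equivalences to isomorphisms in $\textbf{Ho}(\textbf{DGL})$ by Theorem~1.1, which yields natural isomorphisms $M_{Qui}(\mathbb{X}) \cong M_{Qui}(\mathbb{X}')$ and $M_{Qui}(\mathbb{Y}) \cong M_{Qui}(\mathbb{Y}')$ as persistence modules valued in $\textbf{Ho}(\textbf{DGL})$. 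Functoriality transports the $\varepsilon$-interleaving of $(\mathbb{X}', \mathbb{Y}')$ to an $\varepsilon$-interleaving of $(M_{Qui}\mathbb{X}', M_{Qui}\mathbb{Y}')$, which then transfers to $(M_{Qui}\mathbb{X}, M_{Qui}\mathbb{Y})$ via the natural isomorphisms. Taking infimum over $\varepsilon$ and over admissible $(\mathbb{X}', \mathbb{Y}')$ gives the desired bound.

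For the second bullet I would invoke the classical fact from rational homotopy theory that there is a natural isomorphism $H_*(M_{Qui}(X)) \cong \pi_{*+1}(X) \otimes \mathbb{Q}$, coming from Quillen's construction of the minimal Lie model. This upgrades to a natural isomorphism of persistence objects $H_* \circ M_{Qui}(\mathbb{X}) \cong \pi_*(\mathbb{X})$ in $\textbf{grVec}$ (with the degree convention adopted in the paper), and principle (ii) then forces the equality of their interleaving distances. The remaining inequality follows from principle (i) applied to $H_* : \textbf{Ho}(\textbf{DGL}) \to \textbf{grVec}$. The third bullet proceeds identically once one identifies $\mathbb{V}$ and $\mathbb{W}$ with $H_*(\mathbb{X})$ and $H_*(\mathbb{Y})$ via the persistence construction fixed earlier in the paper, and then applies principle (i) to the singular homology functor $H_*(-;\mathbb{Q}) : \textbf{Top} \to \textbf{grVec}$.

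The main obstacle I anticipate lies in making the left-hand inequality of the first bullet strictly rigorous: one must verify that after replacing endpoints of an interleaving by objects isomorphic in $\textbf{Ho}(\textbf{DGL})$, the interleaving squares still commute on the nose in $\textbf{Ho}(\textbf{DGL})$ rather than only up to some additional homotopy. This forces one to unpack carefully the phrase "up to weak equivalences" in Theorem~1.1, and to check that $M_{Qui}$ really lifts to a strict functor $\textbf{Fun}((\mathbb{R},\leq), \textbf{Top}) \to \textbf{Fun}((\mathbb{R},\leq), \textbf{Ho}(\textbf{DGL}))$ rather than merely pointwise assignments. Once this functoriality is secured, the rest of the argument is a routine diagram chase using principles (i) and (ii).
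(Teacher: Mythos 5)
Your proposal is correct and follows essentially the same route as the paper: both arguments rest on the $1$-Lipschitz property of functors applied componentwise to interleavings, the transport of interleavings along natural isomorphisms, the replacement of $(\mathbb{X},\mathbb{Y})$ by weakly equivalent $(\mathbb{X}',\mathbb{Y}')$ realizing $d_{HI}$, and the natural isomorphisms $H_*\circ M_{Qui}(\mathbb{X})\cong\pi_*(\mathbb{X})$ and $\mathbb{V}\cong H_*(\mathbb{X})$ (the paper's Lemma~\ref{13} and Proposition~\ref{14}). The subtlety you flag about whether $M_{Qui}$ is a strict functor into $\textbf{Ho}(\textbf{DGL})^{\mathbb{R}}$ is real, but the paper handles it exactly as you propose, via the uniqueness of Lie representatives up to weak equivalence.
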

\vspace{0.3cm}

\section{Preliminaries}

\subsection{Interleaving Distance}
Persistence modules are the categorization\cite{bubenik2014categorification} of persistent homology. 
In general, a persistence module can be defined as a functor $F: \mathcal{P} \to C$, in which $\mathcal{P}$ is a poset, where the $\text{ob }\mathcal{P}$ is the set $\mathcal{P}$, and morphisms are the partial order of $\mathcal{P}$, and $C$ is any abelian category. 
It is obvious that the category $\mathcal{P}$ is thin. 
A category is said to be thin if, for every pair of objects in this category, there exists at most one morphism from $a$ to $b$. 
In specific studies, the poset $\mathcal{P}$ that we usually consider is $(\mathbb{R},\leq)$ or $(\mathbb{Z},\leq)$. 
In category $\mathbb{R}$, the object is a real number $r\in \mathbb{R}$, and the morphism $r\to s$ exists if and only if $r\leq s$. 
Similarly, we can define the category $(\mathbb{Z},\leq)$.

In this paper, persistence modules we discuss are functors $(\mathbb{R},\leq) \to C$, where $C$ can be the category $\textbf{Vec}$, the category $\textbf{DGL}$, or the category $\textbf{Top} $. 

In this subsection, we will introduce morphisms and 'distance' between persistence modules, that is, the interleaving distance $d_I$. 
The interleaving distance\cite{escolar2023interleavings} between persistence modules can be seen as a generalization of the bottleneck distance $d_B$ between persistence diagrams\cite{edelsbrunner2002topological}. 
Note that unless otherwise specified, the persistence modules considered are always functors $\mathbb{R} \to C$, in which $C$ is any abelian category. 
For a persistence module $\mathbb{X}:\mathcal{P}\to C$, we denote $\mathbb{X}(a\leq b)$ as $\mathbb{X}_{a\leq b}$ and denote $\mathbb{X}(a)$ as $\mathbb{X}_a$. 

\vspace{0.3cm}
\begin{definition}
    For persistence modules $\mathbb{X}$ and $\mathbb{Y}$, a morphism between $\mathbb{X}$ and $\mathbb{Y}$ is a natural transformation between $\mathbb{X}$ and $\mathbb{Y}$, $f: \mathbb{X}\Rightarrow \mathbb{Y}$ that is denoted as $f:\mathbb{X} \to \mathbb{Y}$.
\end{definition}
\vspace{0.3cm}
The collection of all functors from $\mathcal{P}$ to $C$ and all natural transformations between the functors is the category $C^{\mathcal{P}}$.

If $\mathcal{P}=\mathbb{R}$, we may think that persistence modules depict the evolution of objects in $C$ over time. 
For instance, if persistence modules $\mathbb{X}, \mathbb{Y}$ satisfying $\mathbb{X}(t)=\mathbb{Y}(t+\delta)$ for some constant $\delta$, then $\mathbb{X}$ and $\mathbb{Y}$ are same by shifting time $\delta$. 
However, there is no isomorphism between persistence modules $\mathbb{X}$ and $\mathbb{Y}$, even morphism generally. 
Then, we need to expand the notations of morphisms and isomorphisms between persistence modules to the new version that contains the information of $\epsilon$-shifting.

For $\delta \geq 0$, we define that the $\delta$-interleaving category $I^{\delta}$ is the thin category such that $\text{ob }I^{\delta} := \mathbb{R} \times \{0,1\}$ and there is the morphism $(r,i)\to (s,j)$ if and only if either

(1) $r+\delta \leq s$, or

(2) $i=j$ and $r\leq s$.\\
There exist two functors 
$$E^0,E^1:\mathbb{R} \to I^{\delta}$$
mapping $r \in \mathbb{R}$ to $(r,0)$ and $(r,1)$, respectively.
\vspace{0.3cm}
\begin{definition}
    Let $C$ be any category and $\mathbb{X},\mathbb{Y}: \mathbb{R} \to C$ be any two functors.
    A $\delta$-interleaving between $\mathbb{X}$ and $\mathbb{Y}$ is a functor
    $$Z:I^{\delta} \to C$$
    satisfying $Z \circ E^0=\mathbb{X}$ and $Z\circ E^1=\mathbb{Y}$. 
\end{definition}
\vspace{0.3cm}
We call persistence modules $\mathbb{X},\mathbb{Y}:\mathbb{R}\to C$ are $\delta$-interleaved, if there exists a functor $Z:I^{\delta} \to C$ that is a $\delta$-interleaving between $\mathbb{X}$ and $\mathbb{Y}$.

Let $\mathbb{X}(\delta):\mathbb{R} \to C$ be the functor by shifting $\mathbb{X}$ downward by $\delta$, i.e., $\mathbb{X}(\delta)_r:=\mathbb{X}_{r+\delta}$ and $\mathbb{X}(\delta)_{r\leq s}:=\mathbb{X}_{r+\delta \leq s+\delta}$ for all $r\leq s \in \mathbb{R}$. 
Similarly, $f(\delta):\mathbb{X}(\delta) \to \mathbb{Y}(\delta)$ is defined by $f(\delta)_{t} := f_{t+\delta}$, where $f:\mathbb{X}\to \mathbb{Y}$ is a morphism between persistence modules. 
Specially, we define the morphism $\phi^{\mathbb{X},\delta}: \mathbb{X} \to \mathbb{X}(\delta)$ for any $\mathbb{X}:\mathbb{R} \to C$, in which $\phi^{\mathbb{X},\delta}_t=\mathbb{X}_{t\leq t+\delta}$. 
A $\delta$-interleaving $Z$ between $\mathbb{X}$ and $\mathbb{Y}$ is characterized by a pair of natural transformations $f:\mathbb{X}\to \mathbb{Y}(\delta)$ and $g:\mathbb{Y} \to \mathbb{X}(\delta)$ , satisfying the compatibility conditions $g(\delta) f=\phi^{\mathbb{X},2\delta}$ and $f(\delta) g=\phi^{\mathbb{Y},2\delta}$. 
On the other hand, $Z: I^{\delta} \to C$ is entirely determined by these natural transformations, which are referred to as $\delta$-interleaving morphisms. 
When $\delta=0$, these morphisms reduce to a pair of mutually inverse natural isomorphisms. 

\vspace{0.3cm}
\begin{definition}
    We define the interleaving distance $d_I$ as a binary function
    $$d_I:\text{ob }C^{\mathbb{R}}\times \text{ob }C^{\mathbb{R}} \to [0,\infty],$$
    by taking 
    $$d_I(\mathbb{X},\mathbb{Y}):=\text{inf } \{\delta\ | \mathbb{X} \text{ and } \mathbb{Y} \text{ are }\delta\text{-interleaved}\}.$$
\end{definition}
\vspace{0.3cm}
It is straightforward to verify that if $\mathbb{X}$ and $\mathbb{W}$ are $\delta$-interleaved, and $\mathbb{W}$ and $\mathbb{Y}$ are $\epsilon$-interleaved, then $\mathbb{X}$ and $\mathbb{Y}$ are $\delta + \epsilon$-interleaved. 
Thus, we know that $d_I$ satisfies the triangle inequality. 
Therefore, the $d_I$ is obviously a pseudo-distance. 
What's more, if $\mathbb{X},\mathbb{X}',\mathbb{Y} \in \text{ob } C^{\mathbb{R}}$ with $\mathbb{X} \cong \mathbb{X}'$, then $d_I(\mathbb{X},\mathbb{Y})=d_I(\mathbb{X}' ,\mathbb{Y})$, so function $d_I$ defines a pseudo-distance on the isomorphism classes of objects in the category $C^{\mathbb{R}}$.

One of the most useful aspects of the categorical view of interleavings is that if we apply a functor to $\delta$-interleaving, then the resulting diagrams are also $\delta$-interleaving. That is, 
\vspace{0.3cm}
\begin{proposition}{\cite{Bubenik_2014}}
    Let $\mathbb{X},\mathbb{Y}:\mathbb{R} \to C$ be two persistence modules and $H:C \to D$ be a functor. 
    If $\mathbb{X}$ and $\mathbb{Y}$ are $\delta$-interleaved, then so are $H\mathbb{X}$ and $H\mathbb{Y}$. Thus,
	$$d_I(H\mathbb{X},H\mathbb{Y}) \leq d_I(\mathbb{X},\mathbb{Y}).$$
\end{proposition}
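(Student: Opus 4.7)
The plan is to exploit the fact that the definition of a $\delta$-interleaving given in the excerpt is purely functorial: a $\delta$-interleaving between $\mathbb{X}$ and $\mathbb{Y}$ is a functor $Z : I^{\delta} \to C$ satisfying $Z \circ E^0 = \mathbb{X}$ and $Z \circ E^1 = \mathbb{Y}$. Since functors compose, post-composing such a $Z$ with any functor $H : C \to D$ should immediately produce an interleaving at the same scale in $D$.

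First I would fix $\delta \geq 0$ and assume $\mathbb{X}$ and $\mathbb{Y}$ are $\delta$-interleaved, so that a witness $Z : I^{\delta} \to C$ exists. I would then set $Z' := H \circ Z : I^{\delta} \to D$ and check the two defining equalities:
\begin{equation*}
Z' \circ E^0 = H \circ Z \circ E^0 = H \circ \mathbb{X} = H\mathbb{X},
\qquad
Z' \circ E^1 = H \circ Z \circ E^1 = H \circ \mathbb{Y} = H\mathbb{Y}.
\end{equation*}
This identifies $Z'$ as a $\delta$-interleaving between $H\mathbb{X}$ and $H\mathbb{Y}$, so the first claim is done.

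For the distance bound I would observe that the argument above gives an inclusion of sets
\begin{equation*}
\{\delta \geq 0 : \mathbb{X}, \mathbb{Y} \text{ are } \delta\text{-interleaved}\}
\subseteq
\{\delta \geq 0 : H\mathbb{X}, H\mathbb{Y} \text{ are } \delta\text{-interleaved}\},
\end{equation*}
and then apply the infimum, noting that the infimum over a larger set can only be smaller, so $d_I(H\mathbb{X}, H\mathbb{Y}) \leq d_I(\mathbb{X}, \mathbb{Y})$. The case where the right-hand side is infinite is vacuous, and the case where the left-hand set is empty does not arise because the right-hand set then contains at least whatever values the left-hand set contains.

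There is essentially no obstacle here; the only thing worth checking carefully is that the definition of $\delta$-interleaving used in the excerpt really is invariant under post-composition with an arbitrary functor, which is transparent from the functorial formulation via $I^{\delta}$. One could alternatively phrase the proof in terms of the equivalent data of natural transformations $f : \mathbb{X} \to \mathbb{Y}(\delta)$ and $g : \mathbb{Y} \to \mathbb{X}(\delta)$ satisfying the compatibility relations $g(\delta) f = \phi^{\mathbb{X}, 2\delta}$ and $f(\delta) g = \phi^{\mathbb{Y}, 2\delta}$; applying $H$ whiskers these transformations into $Hf$ and $Hg$, and functoriality preserves the compatibility identities, recovering the same conclusion.
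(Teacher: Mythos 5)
Your proof is correct and is exactly the standard argument: the paper states this proposition as a cited result from Bubenik--de Silva--Scott without reproving it, and the intended justification is precisely the composition $H \circ Z : I^{\delta} \to D$ that you write down, followed by taking the infimum over the (larger) set of interleaving parameters. Nothing further is needed.
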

\vspace{0.3cm}
The process of composition of functors can be seen as the process of processing information, and information may be lost after processing, so the difference between two persistence modules may be reduced. 
From this perspective, it is also easy to understand the actual meaning of the previous proposition. 
Meanwhile, there are scholars studying similar topics in this discussion, which is the change of interleaving distance when persistence modules composite some functors\cite{ginot2019multiplicative,lupo2022persistence}.

\subsection{Quillen Models}
A simply connected space $X$ is called a rational space if $X$ satisfies one of following equivalent conditions(Theorem 9.3 of \cite{felix2012rational}):
    \begin{itemize}
        \item $\pi_{*}(X) \cong \pi_{*}(X) \otimes_{\mathbb{Z}} \mathbb{Q}$
        \item $H_{*}(X,pt;\mathbb{Z}) \cong H_{*}(X,pt;\mathbb{Z})\otimes_{\mathbb{Z}} \mathbb{Q}$
        \item $H_{*}(\Omega X,pt;\mathbb{Z}) \cong H_{*}(\Omega X,pt;\mathbb{Z})\otimes_{\mathbb{Z}} \mathbb{Q}$
    \end{itemize}
If $H_i(X,pt;\mathbb{Z})\otimes_{\mathbb{Z}} \mathbb{Q}$ is a finitely dimensional vector space for all $i\in \mathbb{N}$, we call $X$ is of finite type.
\vspace{0.3cm}
\begin{definition}
    For a simply connected space $X$, a rationalization of $X$ is a continuous map $\varphi: X \to X_{\mathbb{Q}}$ satisfying that $\varphi$ induces an isomorphism
    $$\pi_{*}(X) \otimes_{\mathbb{Z}} \mathbb{Q} \to \pi_{*}(X_{\mathbb{Q}}),$$
    where $X_{\mathbb{Q}}$ is a simply connected rational space. 
\end{definition}
\vspace{0.3cm}
For any simply connected topological space $X$, we can always find a rational space $X_{\mathbb{Q}}$ such that $X_{\mathbb{Q}}$ is the rationalization of $X$. 
\vspace{0.3cm}
\begin{theorem}{\cite{felix2012rational}}
    (i) Let $X$ be a simple connected space. 
    Then there exists a relative CW complex $(X_{\mathbb{Q}} , X)$ that lacks 0-dimensional and 1-dimensional cells so that the inclusion $\varphi:X\to X_{\mathbb{Q}} $ is a rationalization. 
        
    (ii) Given $(X_{\mathbb{Q}} , X)$ as described in (i) and $Y$ as any simply connected rational space.
    For any continuous map $f: X \to Y$, we may extend $f$ to a continuous map $g:X_{\mathbb{Q}} \to Y$. 
    Furthermore, if $g':X_{\mathbb{Q}}  \to Y$ extends $f':X\to Y$ then any homotopy between $f$ and $f'$ can be extended to a homotopy between $g$ and $g'$. 
        
    (iii) The rationalization specified in (i) is unique up to homotopy equivalence relative to $X$. 
\end{theorem}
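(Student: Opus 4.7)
The plan is to follow the classical cell-by-cell construction used in Félix–Halperin–Thomas, whose key ingredient is the prior construction of rationalizations of spheres. Concretely, for each $n\geq 2$ one builds $S^n_{\mathbb{Q}}$ as a telescope that attaches cells of dimension $n,n+1,n+2,\dots$ to $S^n$ so as to invert all primes in $\pi_*(S^n)\otimes \mathbb{Q}$. Granting the existence of such $S^n_{\mathbb{Q}}$ together with its rational cone $D^{n+1}_{\mathbb{Q}}$, the three parts of the theorem reduce to a straightforward induction combined with obstruction theory.

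For part (i), I would first replace $X$ by a CW complex whose cells all have dimension $\geq 2$ (allowed by simple connectivity), and then construct $X_{\mathbb{Q}}$ one skeleton at a time. Suppose $X_n\hookrightarrow (X_n)_{\mathbb{Q}}$ has been produced; for each $(n+1)$-cell of $X$ attached along $\varphi:S^n\to X_n$, I attach the rational cone $D^{n+1}_{\mathbb{Q}}$ to $(X_n)_{\mathbb{Q}}$ along the composite $S^n\to X_n\hookrightarrow (X_n)_{\mathbb{Q}}$. A Mayer–Vietoris/long-exact-sequence argument on rational homotopy then shows that the resulting inclusion remains a rationalization, and passing to the colimit gives the desired relative CW pair $(X_{\mathbb{Q}},X)$ without $0$- or $1$-cells.

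For part (ii), I would invoke standard obstruction theory applied to the relative CW filtration of $(X_{\mathbb{Q}},X)$. The obstruction to extending $f:X\to Y$ across a rational $k$-cell lies in a group built from $\pi_k(Y)$, which is a $\mathbb{Q}$-vector space because $Y$ is rational; since the attaching data of a rational cell is infinitely divisible, every obstruction class vanishes. Running the same argument one dimension higher (i.e., for the cylinder $(X_{\mathbb{Q}}\times I,\,X\times I\cup X_{\mathbb{Q}}\times\partial I)$) extends homotopies. Part (iii) is then a formal consequence: given two rationalizations $(X_{\mathbb{Q}},X)$ and $(X'_{\mathbb{Q}},X)$, applying (ii) to the inclusions $X\hookrightarrow X'_{\mathbb{Q}}$ and $X\hookrightarrow X_{\mathbb{Q}}$ produces maps in both directions, and the homotopy-extension clause of (ii) forces their compositions to be homotopic to the identity rel $X$.

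The main obstacle I expect is the honest construction and verification of $S^n_{\mathbb{Q}}$ in the even-dimensional case. For odd $n$ one has $S^n_{\mathbb{Q}}\simeq K(\mathbb{Q},n)$ and the telescope model is transparent, but for even $n$ the nontrivial Whitehead square $[\iota_n,\iota_n]\in\pi_{2n-1}(S^n)$ forces extra cells, and one must check that the finite-dimensional approximations used at each skeleton really induce isomorphisms on $\pi_*(-)\otimes\mathbb{Q}$. Once this building block is certified—together with the computation that its inclusion from $S^n$ is a rationalization—the inductive skeleton argument and the obstruction-theoretic extension and uniqueness proceed without surprise.
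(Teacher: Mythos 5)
This theorem is not proved in the paper at all: it is imported verbatim (with citation) from F\'elix--Halperin--Thomas, where it appears as Theorem 9.7, so there is no in-paper argument to compare against. Your outline is essentially the standard proof from that reference --- build $S^n_{\mathbb{Q}}$ as a telescope, rationalize skeleton by skeleton with rational cells, extend maps by obstruction theory, and deduce uniqueness formally from (ii) --- and the overall architecture is sound, including your honest flagging of the even-dimensional sphere case as the real work.

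Two points in the sketch need tightening. First, in part (i) you cannot attach $D^{n+1}_{\mathbb{Q}}$ ``along the composite $S^n \to X_n \hookrightarrow (X_n)_{\mathbb{Q}}$'': the boundary of the rational cell is $S^n_{\mathbb{Q}}$, so the attaching map must first be extended from $S^n$ over $S^n_{\mathbb{Q}}$, which is legitimate only because $(X_n)_{\mathbb{Q}}$ is already rational --- i.e., the construction in (i) and the extension property (ii) for the pairs $(S^n_{\mathbb{Q}},S^n)$ must be proved by an interleaved induction, not sequentially. Relatedly, the verification that the result is still a rationalization is cleanest via the homology characterization ($H_*(\varphi;\mathbb{Z})$ becomes an isomorphism after $\otimes\,\mathbb{Q}$); there is no Mayer--Vietoris for homotopy groups. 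Second, in part (ii) the obstruction groups $H^{k+1}(X_{\mathbb{Q}},X;\pi_k Y)$ do vanish, but not for the reason you give (``divisible attaching data''): the correct mechanism is that the long exact sequence forces every $H_*(X_{\mathbb{Q}},X;\mathbb{Z})$ to be a torsion group (an extension of the torsion of $H_*(X)$ by the torsion divisible cokernel of $H_*(X)\to H_*(X)\otimes\mathbb{Q}$), and cohomology of a relative complex with torsion integral homology and coefficients in a $\mathbb{Q}$-vector space is identically zero. With those repairs the argument, including the formal deduction of (iii), goes through as in the cited source.
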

\vspace{0.3cm}
The theorem told us that every simply connected space can be rationalized and every continuous map $\varphi:X \to Y$ between simply connected spaces can induce the continuous map $\tilde{\varphi}:X_{\mathbb{Q}} \to Y_{\mathbb{Q}}$. 
    
In this subsection, we will focus on the category $\textbf{Top} $ of simply connected rational spaces of finite type, and objects in $\textbf{Top} $, that is simply connected rational spaces of finite type. Therefore, unless otherwise stated, all topological spaces encountered in this paper are assumed to be simply connected rational spaces of finite type, and all numerical fields involved are assumed to be the field of rational numbers, $\mathbb{Q}$.
    
Specifically, we may notice that for any $X \in \text{ob }\textbf{Top} $, $\pi_{*}(X)$ is a vector space over $\mathbb{Q}$. 
Then for a functor $\mathbb{X}: (\mathbb{R},\leq) \to \textbf{Top} $, $\pi_{*}(\mathbb{X}), H_{*}(\mathbb{X}), H^{*}(\mathbb{X}) : (\mathbb{R},\leq) \to \textbf{grVec} $ are persistence modules, which is the most commonly encountered persistence module. 
In rational homotopy theory, we have more refined algebraic models than homotopy groups and homology groups, minimal Sullivan models, and minimal free Lie models.

In Quillen's paper\cite{quillen1969rational}, Quillen defined and used a sequence of functors that are Quillen equivalent, respectively, to assign to a simply connected rational space of finite type a differential graded Lie algebra ($dgl$), 
$$X \mapsto \lambda X. $$
We call the functor $$\lambda : \textbf{Top}  \to \textbf{DGL}$$ Quillen functor where $\textbf{DGL}$ is the category  of connected $dgl$, that is $L=\{L_i\}_{i>0}$.

Before starting a detailed introduction to the Quillen model of rational spaces, we will first introduce the functors defined by Quillen and their main properties in homotopy theory.

We need to recall some notions of coalgebras and Lie algebras. 
\vspace{0.3cm}
\begin{definition}
    A graded coalgebra $C$ consists of a graded module $C$ equipped with two degree-preserving linear maps, one of which is called the comultiplication $\Delta: C \to C \otimes C $, and the other is referred to as the augmentation $\epsilon: C \to \mathbb{Q}$. 
    These maps satisfy the coassociativity condition $(\Delta \otimes id)\Delta = (id \otimes \Delta)\Delta$ and the counit condition$(id\otimes \epsilon)\Delta=(\epsilon \otimes id)\Delta ={id}_C$. 
\end{definition}
\vspace{0.3cm}
A graded coalgebra is called cocommutative if 
$$\tau \Delta = \Delta$$
where $\tau : C\otimes C \to C \otimes C$ is the involution $a\otimes b \mapsto {(-1)}^{\text{deg }a \text{ deg }b} b\otimes a$.
We call a graded coalgebra co-augmented by the choice of an element $1 \in C_0$ so that $\epsilon (1)=1$ and $\Delta(1)=1\otimes 1$. 
We can also say that co-augmentation is an embedding $\mathbb{Q} \hookrightarrow C$.  
For such coalgebra $C$, we write $\bar{C}=\text{Ker } \epsilon$, so that $C=\mathbb{Q}\oplus \bar{C}$ and define $\bar{\Delta}: \bar{C} \to \bar{C} \otimes \bar{C}$ with $\bar{\Delta}c=\Delta c- c\otimes 1 - 1\otimes c$. 
\vspace{0.3cm}
\begin{example}
    The coalgebra $\Lambda V$ is an instructive example, where comultiplication $\Delta$ is explicitly defined by the formula $\Delta v = v\otimes 1 + 1\otimes v, \ v \in V$. 
    And the augmented by $\epsilon :\Lambda^+ V \to 0,\ 1\mapsto 1$ and co-augmented by $\mathbb{Q}=\Lambda^0 V$. 
\end{example}
\vspace{0.3cm}
\begin{definition}
    A graded Lie algebra $L$ consists of a graded vector space $L=\{L_i\}_{i \in \mathbb{Z}}$ and a linear map of degree zero, $L \otimes L \to L$, denoted by $x\otimes y \mapsto [x,y]$ which satisfies the following conditions:
    \begin{itemize}
        \item $[x,y]=-{(-1)}^{\text{deg }x \text{ deg }y}[y,x]$
        \item $[x,[y,z]]=[[x,y],z]+{(-1)}^{\text{deg }x \text{ deg }y}[y,[x,z]]$
    \end{itemize}
    The product $[\ ,\ ]$ is called the Lie bracket. 
\end{definition}
\vspace{0.3cm}
We say a linear map of degree $k$, $\theta:L\to L$, is a A derivation of $L$ of degree $k$ if $\theta [x,y]=[\theta x,y]+{(-1)}^{k\text{ deg }x}[x,\theta (y)]$. 
\vspace{0.3cm}
\begin{example}
    Let $V$ be a graded vector space. 
    The tensor algebra $TV$ on $V$ carries a natural graded Lie algebra structure via the bracket operation $[x,y]:=xy-{(-1)}^{\text{deg }x \text{ deg }y}yx$. 
    Then, the free graded Lie algebra $\mathbb{L}_V$ is defined as the smallest graded Lie subalgebra of $TV$ containing $V$. 
    This object satisfies a universal property: any degree-preserving linear map $f: V \to L$ into another graded Lie algebra $L$ may extend uniquely to a graded Lie algebra homomorphism $\mathbb{L}_{V} \to L$. 

    The free graded Lie algebra $\mathbb{L}_V$ naturally inherits a grading structure from the tensor algebra $TV$, which decomposes as the direct sum $\bigoplus_{k=0}^{\infty} T^{k}V$. Here, each homogeneous component $T^{k}V$ consists of tensors of degree $k$. 
    Since $\mathbb{L}_V$ is generated by iterated Lie brackets of elements in $V$, its elements can be stratified by bracket length, defined as the number of generators (from $V$) involved in their construction. 
    
    \begin{itemize}
        \item $\mathbb{L}_{V}=\bigoplus_{k \geq 1}(\mathbb{L}_{V} \cap T^k V);$
        \item $x\in\mathbb{L}_V$ has bracket length $k$ if and only if $x\in \mathbb{L}_V^k := \mathbb{L}_{V}\cap T^k V$.
    \end{itemize}
    Then we may decompose $\mathbb{L}_V=\bigoplus_{i \geq 1} \mathbb{L}_V^i$the differential $d=d_0+d_1+\cdots$, in which $d_k:V \to \mathbb{L}_V \cap T^{k+1} V$. 

    For any free Lie algebra $(\mathbb{L}_V,d=d_0+\cdots)$, if $d_0=0$, then we call it \textbf{minimal}. 
\end{example}
\vspace{0.3cm}
Next, we will review the two functors $C_* : \textbf{DGL} \to \textbf{CDGC} $ and $\mathcal{L} : \textbf{CDGC} \to \textbf{DGL}$ where \textbf{CDGA} is the category of 1-connected cocommutative differential graded coalgebras ($cdgc$), which played important roles in Quillen's work\cite{quillen1969rational}.

Suppose that $(L,d_L)$ is a differential graded Lie algebra. 
The coderivations in $\Lambda sL$, where $sL$ denotes the shift of degrees that is $(sL)_i=L_{i-1}$ for all $i$, are determined by the differential $d_L$ and the Lie bracket $[\ ,\ ]:L\otimes L \to L$ 
$$d_0(sx_1\wedge \cdots \wedge sx_k)=-\sum_{i=1}^{k} {(-1)}^{n_i} sx_1\wedge \cdots \wedge sd_Lx_i \wedge \cdots \wedge sx_k, $$
and 
$$d_1(sx_1\wedge \cdots \wedge sx_k)= \sum_{1\leq i < j \leq k}{(-1)}^{\text{deg }x_i +1}{(-1)}^{n_{ij}} s[x_i,x_j]\wedge sx_1 \cdots s\hat{x}_i \cdots s\hat{x}_j \cdots sx_k $$
where $n_i=\sum_{j<i} \text{deg } sx_j$, and $sx_1 \wedge \cdots \wedge sx_k= {(-1)}^{n_{ij}} sx_i \wedge sx_j \wedge sx_1 \cdots s\hat{x}_i \cdots s\hat{x}_j \cdots \wedge sx_k$. (Here, symbol $\hat{\ }$ means 'deleted'. ) 

By simple computation, we can know that $d=d_0+d_1$ is a coderivation. 
In other words, $(\Lambda sL,d=d_0+d_1)$ is a differential graded coalgebra. 
\vspace{0.3cm}
\begin{definition}
    The Cartan-Eilenberg-Chevalley construction on a $dgl$ $(L,d_L)$ is the $cdgc$ $C_*(L,d_L)=(\Lambda sL,d=d_0+d_1)$. 
\end{definition}
\vspace{0.3cm}
The functor $C_*$ assigns a $dgl$ $(L,d_L)$ a $cdgc$ $(\Lambda sL,d)$, and if $E=\{E_i \}_{i>0} $ and $L=\{L_i \}_{i>0} $, then $\varphi : E \to L$ is a quasi-isomorphism if and only if $C_*(\varphi)$ is a quasi-isomorphism\cite{felix2012rational}. 

There are some methods for constructing free Lie algebras, but we will introduce one that is closely related to $C_*$, Quillen's functor $\mathcal{L}$, which is the analog of the cobar construction. 

Let $(C,d)=(\bar{C},d)\oplus \mathbb{Q}$ be any co-augmented $cdgc$. 
By the cobar consturction, $\Omega C=Ts^{-1}\bar{C}$. 
The differential has the form $d=d_0+d_1$ with $d_0: s^{-1}\bar{C} \to s^{-1}\bar{C}$ and $d_1:s^{-1}\bar{C} \to s^{-1}\bar{C} \otimes s^{-1}\bar{C}$ that derives from the comultipliaction $\Delta$ of $C$. 
Since $C$ is cocommutative, then we always express the $d_1(s^{-1}c)$ as the sum of commutators in $Ts^{-1}\bar{C}$. 
Let $\bar{\Delta}c=\sum a_i\otimes b_i$, then $\bar{\Delta}c=\sum{(-1)}^{\text{deg }a_i \text{ deg }b_i} b_i\otimes a_i$. 
So $$d_1 (s^{-1}c) = \frac{1}{2} \sum_{i} {(-1)}^{\text{deg }a_i}[s^{-1}a_i,s^{-1}b_i]$$
through simple calculations, then we can know that $d_1: s^{-1}\bar{C} \to \mathbb{L}_{s^{-1}\bar{C}} \subseteq Ts^{-1}\bar{C}$. 
Hence, we have proven that $d=d_0+d_1$ is the Lie derivation of free Lie algebra $\mathbb{L}_{s^{-1}C}$.
\vspace{0.3cm}
\begin{definition}
    The $dgl$ $(\mathbb{L}_{s^{-1}\bar{C}},d)$ is referred to as the Quillen construction on the co-augmented $cdgc$ $(C,d)$ and it is denoted by $\mathcal{L}(C,d)$. 
\end{definition} 
\vspace{0.3cm}
\begin{theorem}{\cite{felix2012rational}}
    Let $(L=\{L_i\}_{i\geq 1},d)$ be a connected $dgl$ and $(C=\mathbb{Q}\oplus C_{\geq 2},d)$ is a $cdgc$. 
    Then, there exist natural quasi-isomorphisms 
    $$\varphi:(C,d)\to C_*\mathcal{L}(C,d) \text{ and } \psi:\mathcal{L}C_*(L,d) \to (L,d)$$
    of $cdgc$'s (respectively, of $dgl$'s). 
\end{theorem}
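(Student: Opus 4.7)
The plan is to first write down the maps $\varphi$ and $\psi$ in closed form on generators, verify by direct computation that they respect the cdgc/dgl structures, and then establish that they are quasi-isomorphisms via a word-length filtration whose associated graded is the classical Koszul complex pairing the free Lie and free commutative functors.

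First, I would define $\psi:\mathcal{L}C_*(L,d)\to (L,d)$ on the generators $s^{-1}\overline{\Lambda sL}$ of the free Lie algebra by sending $s^{-1}(sx)\mapsto x$ for the length-one wedges and $s^{-1}(sx_1\wedge\cdots\wedge sx_k)\mapsto 0$ for $k\geq 2$, then extending uniquely as a graded Lie algebra map. Dually, I would define $\varphi:(C,d)\to C_*\mathcal{L}(C,d)=\Lambda s\mathbb{L}_{s^{-1}\bar C}$ by $c\mapsto s(s^{-1}c)$ on $\bar C$ and $1\mapsto 1$, and extend to a cdgc map. The compatibility with differentials is then an explicit calculation: for $\psi$, one checks that the two pieces $d_0+d_1$ of $\mathcal{L}C_*(L,d)$ recover $d_L$ and the bracket respectively on the length-one summand, so $\psi d=d_L\psi$; for $\varphi$, one dualises this to verify $\varphi d = d \varphi$ using the coassociativity of $\Delta$ and the definition of $d_1$ via $\tfrac{1}{2}\sum(-1)^{\deg a_i}[s^{-1}a_i,s^{-1}b_i]$.

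Second, I would introduce compatible filtrations. On $\mathcal{L}C_*(L,d)=\mathbb{L}_{s^{-1}\overline{\Lambda sL}}$ I would filter by bracket length, $F^p=\bigoplus_{k\geq p}\mathbb{L}^k$, and on $(L,d)$ by lower-central-series length. On $C_*\mathcal{L}(C,d)=\Lambda s\mathbb{L}_{s^{-1}\bar C}$ I would filter by wedge length, with $C$ in filtration degree $1$. Both $\varphi$ and $\psi$ respect the filtrations, and the connectivity hypotheses $L_i=0$ for $i\leq 0$ and $C_0=\mathbb{Q},\,C_1=0$ guarantee that in each total degree only finitely many filtration strata contribute, so the associated spectral sequences are first-quadrant and converge strongly. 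On the $E^0$-page the differential reduces to the linear part $d_0$ coming from $d_L$ (respectively from $d_C$), and the induced map on $E^0$ is, up to a shift, the pairing between the free Lie algebra $\mathbb{L}_V$ and the free commutative coalgebra $\Lambda sV$ on a graded vector space $V$ with zero differential.

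Third, the crux of the argument is to identify $E^1$ and show that $\psi$ and $\varphi$ become quasi-isomorphisms after taking the associated graded. This is the Koszul duality statement: the inclusion $V\hookrightarrow \mathcal{L}(\Lambda sV,0)$ and the inclusion $\Lambda sV\hookrightarrow C_*(\mathbb{L}_V,0)$ are quasi-isomorphisms, because the bar complex of the cocommutative coalgebra $\Lambda sV$ is the classical Koszul resolution of $V$ as a Lie module, and its homology is concentrated in bracket length one. Granting this associated-graded quasi-isomorphism, the comparison theorem for converging spectral sequences immediately upgrades $\varphi$ and $\psi$ to quasi-isomorphisms on the nose. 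Naturality is automatic from the construction, since all choices are functorial in $(L,d)$ and $(C,d)$.

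The main obstacle I expect is the $E^1$-computation: establishing, in characteristic zero and with the correct signs, that the homology of $(\mathbb{L}_{s^{-1}\overline{\Lambda sV}},d_1)$ is concentrated in bracket length one and equals $V$. This is where the cocommutative (as opposed to merely coassociative) hypothesis on $C$, and symmetrically the Jacobi identity on $L$, enter in an essential way; the remaining bookkeeping is largely a matter of tracking signs and shifts, which should follow the pattern in the Quillen construction already recalled above.
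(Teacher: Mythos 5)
The paper does not prove this statement: it is quoted from F\'elix--Halperin--Thomas, so your sketch has to be measured against the standard proof there rather than against anything in the text. Your treatment of the unit $\varphi$ is essentially that proof: explicit formula on generators, a word-length filtration, and the Koszul computation that for a free Lie algebra on a complex $W$ one has $H_+(C_*(\mathbb{L}_W,d_W))\cong sH(W)$ (via $C_*\simeq\mathrm{Tor}^{UL}(\mathbb{Q},\mathbb{Q})$ and $U\mathbb{L}_W=TW$). One correction: the filtration on $C_*\mathcal{L}(C)=\Lambda s\mathbb{L}_{s^{-1}\bar C}$ must be by the total number of $\bar C$-letters, not by wedge length. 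The bracket part $d_1$ of the Cartan--Eilenberg--Chevalley differential lowers wedge length, so filtering by $\Lambda^{\le p}$ leaves the full differential of $\mathcal{L}(C)$ on the associated graded and the $E^1$-page already presupposes $H(\mathcal{L}(C))$ --- the argument becomes circular. With the letter-count filtration the associated graded is $C_*(\mathbb{L}_{s^{-1}\bar C},d_C)$ and the key lemma applies; the hypothesis $C=\mathbb{Q}\oplus C_{\ge2}$ is exactly what bounds the letter count by the total degree and gives convergence.

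The genuine gap is in your argument for the counit $\psi:\mathcal{L}C_*(L,d)\to(L,d)$. Filtering the source by bracket length and the target by the lower central series does make $\psi$ filtration-preserving, but the induced map on associated gradeds is \emph{not} a quasi-isomorphism, so the spectral sequence comparison fails at $E^1$. Concretely, take $L$ abelian on a single generator $x$ of odd degree. Then $C_*(L)=\Lambda(sx)=\mathbb{Q}[sx]$ with $sx$ of even degree and zero differential, so the bracket-length-one stratum of the source, with its induced differential, is $s^{-1}\overline{\mathbb{Q}[sx]}$, which is infinite-dimensional in homology; the corresponding stratum $\Gamma^1L/\Gamma^2L=L$ of the target is one-dimensional. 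The cancellation that makes $\psi$ a quasi-isomorphism happens \emph{across} bracket-length strata (the classes $s^{-1}(sx)^{\wedge k}$, $k\ge2$, are killed only by the quadratic part of the differential, which raises bracket length), so no filtration-by-filtration comparison of this shape can close the argument. The standard repair, and the one F\'elix--Halperin--Thomas use, is to deduce $\psi$ from $\varphi$: since $L$ is connected, $C_*(L)=\mathbb{Q}\oplus(C_*L)_{\ge2}$, so $\varphi_{C_*(L)}$ is a quasi-isomorphism by the first half; the triangle identity $C_*(\psi_L)\circ\varphi_{C_*(L)}=\mathrm{id}_{C_*(L)}$ then forces $C_*(\psi_L)$ to be a quasi-isomorphism; and $C_*$ reflects quasi-isomorphisms between connected $dgl$'s (a fact the paper itself records immediately before the theorem), whence $\psi_L$ is one. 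You should replace the second half of your filtration argument with this adjunction step; the rest of the sketch is sound.
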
 
\vspace{0.3cm}
The two functors, $C_*$ and $\mathcal{L}$, we introduced above are adjoint to each other:
$$ \mathcal{L} \dashv C_* .$$
What's more, the adjunction $(\mathcal{L} \dashv C_*)$ is a Quillen adjunction between the projective model structure on $\textbf{DGL}$ and the model structure on $\textbf{CDGC}$. 
    
For the category $\textbf{DGL}$, there is a model category structure $(\textbf{DGL})_{proj}$ on the category $\textbf{DGL}$ over $\mathbb{Q}$ so that 
\begin{itemize}
    \item the fibrations the surjective maps
    \item weak equivalences the quasi-isomorphisms on the underlying chain complexes.
\end{itemize}
Meanwhile, for the category \textbf{CDGC}, there is a model category structure $(\textbf{CDGC})_{Quillen}$ on the category \textbf{CDGC} over $\mathbb{Q}$ so that 
\begin{itemize}
    \item the cofibrations are the (degreewise) injections;
    \item the weak equivalences are those morphisms that become quasi-isomorphisms under the functor $\mathcal{L}$, that is, quasi-isomorphisms if $dgc$ is 1-connected. 
\end{itemize}
Furthermore, Vladimir Hinich proved that the Quillen adjunction $(\mathcal{L} \dashv C_*)$ is a Quillen equivalence\cite{hinich2001dg}.
More generally, Quillen proved the following theorem
\vspace{0.3cm}
\begin{theorem}{\cite{quillen1969rational}}{\label{11}}
    There exist equivalences of categories
    $$\textbf{Ho}(\textbf{Top} )\xrightarrow[]{\lambda} \textbf{Ho}(\textbf{DGL}) \xrightarrow[]{C_*}  \textbf{Ho}(\textbf{CDGC}). $$
\end{theorem}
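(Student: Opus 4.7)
The plan is to decompose the statement into the two claimed equivalences and treat them separately, relying on the model-categorical machinery already set up and citing Hinich's and Quillen's Quillen-equivalence results at the model-category level before descending to homotopy categories.

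For the equivalence $C_*: \textbf{Ho}(\textbf{DGL}) \xrightarrow{\sim} \textbf{Ho}(\textbf{CDGC})$, I would build directly on the Quillen adjunction $(\mathcal{L} \dashv C_*)$ between $(\textbf{DGL})_{proj}$ and $(\textbf{CDGC})_{Quillen}$ already described. To upgrade this to a Quillen equivalence, I would invoke the criterion that for every cofibrant $(C,d) \in \textbf{CDGC}$ and fibrant $(L,d) \in \textbf{DGL}$, a morphism $\mathcal{L}(C,d) \to (L,d)$ is a weak equivalence if and only if its adjoint $(C,d) \to C_*(L,d)$ is. This criterion is verified by combining the natural quasi-isomorphisms $\varphi:(C,d)\to C_*\mathcal{L}(C,d)$ and $\psi:\mathcal{L}C_*(L,d) \to (L,d)$ from the preceding theorem with the two-out-of-three property for quasi-isomorphisms; these natural maps are precisely the unit and counit of the derived adjunction and become natural isomorphisms in the homotopy categories. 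Passing to $\textbf{Ho}$ then yields the claimed equivalence of categories (this is essentially Hinich's theorem, as cited).

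For the equivalence $\lambda: \textbf{Ho}(\textbf{Top}) \xrightarrow{\sim} \textbf{Ho}(\textbf{DGL})$, I would follow Quillen's original construction through a zigzag of Quillen equivalences. The main steps are: (1) replace $\textbf{Top}$ by the category of $2$-reduced simplicial sets via the singular/realization adjunction restricted to the simply-connected, finite-type rational setting; (2) apply Kan's loop group functor $G$ to pass from reduced simplicial sets to reduced simplicial groups; (3) rationalize and apply the Malcev-style completion to obtain simplicial complete Hopf algebras over $\mathbb{Q}$; (4) take primitives and normalize the resulting simplicial Lie algebra to land in $\textbf{DGL}$. Each step is a Quillen equivalence onto the relevant simply-connected/connected subcategory, so the composition descends to an equivalence of homotopy categories, which is the functor $\lambda$.

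The main obstacle is step (3): verifying that rationalization combined with Malcev completion really induces a Quillen equivalence onto the image. This requires the simply-connected and finite-type hypotheses to control convergence of the lower central series filtration, and it requires checking that the completed Hopf algebra picks up exactly the rational homotopy information (no torsion, no derived-completion issues). Once this step is in hand, the other three steps are well-known Quillen equivalences, and composing all four with the previous $C_*$ equivalence yields the full chain asserted in the theorem.
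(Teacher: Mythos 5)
This theorem is quoted from Quillen's paper and the text gives no proof of its own, so there is nothing internal to compare against; your outline — deducing the $C_*$ equivalence from the Quillen adjunction $(\mathcal{L}\dashv C_*)$ together with the natural quasi-isomorphisms $\varphi$ and $\psi$ and the fact that $C_*$ reflects quasi-isomorphisms, and obtaining $\lambda$ from Quillen's zigzag through reduced simplicial sets, simplicial groups, complete Hopf algebras, and primitives — is exactly the argument of the cited source. The one step you flag as the main obstacle (the completion step) is indeed where the simply-connected, finite-type hypotheses are used in Quillen's original proof, so your sketch is a faithful and correct outline of the standard argument.
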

\vspace{0.3cm}

We define the functor $C^*(-)=\textbf{Hom}(C_*(-),\mathbb{Q})$. 
    Moreover, we have an important fact that $C^*(L)$ is a commutative $cdga$ because $C_*(L)$ is cocommutative. 
    Moreover, if $(L,d_L)$ is connected, then $C_*(L,d_L)=\Lambda sL= \mathbb{Q}\oplus \{C_i\}_{i\geq 2}$. 
    The assertion that $C^*(L,d_L)$ is a Sullivan algebra follows from dualizing the Cartan-Eilenberg-Chevalley construction and leveraging properties of differential graded Lie algebras and Sullivan models. 

    Next, we will introduce the definition of the Quillen model for rational spaces, which is actually a Lie algebra model $(L,d_L)$ for rational spaces $X$ with the property $H_*(L,d_L) \cong (\pi_*(\Omega X) , [\ ,\ ])$ where $[\ ,\ ]$ is determined by the Whitehead product $[\ ,\ ]_{W}$. 
    \vspace{0.3cm}
    \begin{definition}
        A free model of $(L,d) \in \text{ob }\textbf{DGL} $ is a quasi-isomorphism of differential graded Lie algebras 
        $$n:(\mathbb{L}_V,d) \xrightarrow[]{\simeq} (L,d)$$
        with $V=\{V_i\}_{i\geq 1}$. 

        If $(\mathbb{L}_V,d)$ is minimal, we call $m:(\mathbb{L}_V,d) \xrightarrow[]{\simeq} (L,d)$ a minimal free Lie model of $(L,d)$.
    \end{definition}
    \vspace{0.3cm}
    \begin{definition}
        Let $X \in \text{ob }\textbf{Top} $. A Lie model for $X$ is a quasi-isomorphism of differential graded algebras
        $$n_X:C^*(L,d_L) \xrightarrow[]{\simeq} A_{PL}(X).$$
        where $(L,d_L)$ is a connected $dgl$ of finite type.
        Sometimes, we also say that $L$ is the Lie model of $X$. 
        If $L=\mathbb{L}_V$, a free graded Lie algebra, we say $(L,d_L)$ is a free Lie model for $X$. 

        Let $n_Y:C^*(E,d_E) \xrightarrow[]{\simeq} A_{PL}(Y)$ be a Lie model for the space $Y$, and $f:X\to Y$ be a continuous map. 
        Then, a Lie representative for $f$ is a morphism of differential graded Lie algebras
        such that $n_XC^*(\varphi)\sim A_{PL}(f)n_Y$. 
    \end{definition} 
    \vspace{0.3cm}
    In fact, the functor $\lambda : \textbf{Top}  \to \textbf{DGL}$, which is constructed by Quillen, assigns a space $X$ a Lie algebra $\lambda X$ which is a free Lie algebra. 
    Thus, we call $\lambda X$ the Quillen model of $X$, and if a free Lie model $(\mathbb{L}_V,d)$ of $X$ is minimal, then we call $(\mathbb{L}_V,d)$ a minimal free Lie model or minimal Quillen model of $X$. 
    \vspace{0.3cm}
    \begin{example}
        The free Lie model of a sphere $\mathbb{S}^{n+1}$ with $n=2k \text{ or } 2k+1$

        $$\mathbb{L}(v)=
        \begin{cases}
            \mathbb{Q}v, & \text{deg }v=2k\\
            \mathbb{Q}v \oplus \mathbb{Q}[v,v], & \text{deg }v=2k+1. 
        \end{cases}$$
        and $d_L=0$. 
    \end{example}
    \vspace{0.3cm}
    \begin{proposition}{\cite{felix2012rational}}
        Any space $X\in \text{ob }\textbf{Top} $ has a minimal Quillen model $(\mathbb{L}_V,d)$, unique up to isomorphism. 
        Suppose that $m_X:C^*(\mathbb{L}_V) \to A_{PL}(X)$ is the minimal Quillen model of $X$ and $m_Y:C^*(\mathbb{L}_W) \to A_{PL}(Y)$ is the minimal Quillen model of $Y$. 
        For any continuous map $f: X \to Y$, there is a Lie representative $n_{f}: (\mathbb{L}_V,d) \to (\mathbb{L}_W,d)$.
        
    \end{proposition}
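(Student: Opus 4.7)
The plan is to leverage the Quillen equivalence $\textbf{Ho}(\textbf{Top}) \simeq \textbf{Ho}(\textbf{DGL})$ of Theorem \ref{11} together with the projective model structure on $\textbf{DGL}$, in which every free differential graded Lie algebra $(\mathbb{L}_V, d)$ is cofibrant and every object is fibrant. The overall strategy is: first, produce some free Lie model of $X$ from Quillen's functor $\lambda$; second, \emph{minimize} it by inductively splitting off contractible summands to obtain $(\mathbb{L}_V, d)$; and third, use cofibrancy to realize morphisms in $\textbf{Ho}(\textbf{DGL})$ by genuine $dgl$-morphisms.

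For existence, I start from $\lambda X = (\mathbb{L}_U, d)$, which is already a connected free $dgl$ Lie model of $X$, and write $d = d_0 + d_1 + \cdots$ where $d_k$ raises bracket length by $k$. Proceeding by induction on the degree $n$, I decompose the degree-$n$ part as $U_n = U_n^{(0)} \oplus d_0 U_n^{(0)} \oplus U_n^{(1)}$, with $U_n^{(1)}$ selecting generators of $H_n(U, d_0)$ in that degree. Using a change of variables that absorbs the higher-bracket-length corrections, I quotient out the contractible sub-$dgl$ generated by $U_n^{(0)} \oplus d_0 U_n^{(0)}$ and obtain a quasi-isomorphic free $dgl$ with the linear part of $d$ annihilated through degree $n$. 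Passing to the colimit over all degrees yields the desired minimal free Lie model $(\mathbb{L}_V, d)$ together with a quasi-isomorphism to $\lambda X$, hence a Lie model for $X$.

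For uniqueness up to isomorphism, suppose $(\mathbb{L}_V, d)$ and $(\mathbb{L}_{V'}, d')$ are both minimal Quillen models of $X$. They are weakly equivalent in $\textbf{DGL}$, so cofibrancy of $\mathbb{L}_V$ and the lifting axiom produce a quasi-isomorphism $\phi: (\mathbb{L}_V, d) \to (\mathbb{L}_{V'}, d')$. Minimality forces $d(V) \subseteq [\mathbb{L}_V, \mathbb{L}_V]$ and likewise for $d'$, so the linear part $\bar{\phi}: V \to V'$ on indecomposables is a chain map between complexes with zero differential, and its induced map on homology equals $\bar{\phi}$ itself. A bracket-length spectral sequence argument shows that $\bar{\phi}$ is an isomorphism, and a further filtration by bracket length upgrades this to an isomorphism $\phi: \mathbb{L}_V \xrightarrow{\cong} \mathbb{L}_{V'}$ of free $dgl$'s intertwining $d$ and $d'$.

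For the Lie representative of $f: X \to Y$, apply Quillen's functor to obtain $\lambda f: \lambda X \to \lambda Y$ in $\textbf{DGL}$. Under the isomorphisms $\mathbb{L}_V \cong \lambda X$ and $\mathbb{L}_W \cong \lambda Y$ in $\textbf{Ho}(\textbf{DGL})$ supplied by the minimal models, $\lambda f$ corresponds to a homotopy class of morphisms $\mathbb{L}_V \to \mathbb{L}_W$ in $\textbf{DGL}$; since $\mathbb{L}_V$ is cofibrant and every object of $\textbf{DGL}$ is fibrant, this homotopy class is represented by an honest $dgl$-morphism $n_f: (\mathbb{L}_V, d) \to (\mathbb{L}_W, d)$, unique up to $dgl$-homotopy. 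Transporting through the functor $C^*$ and using the naturality of the quasi-isomorphism $A_{PL}(-) \simeq C^* \lambda(-)$ implicit in the Quillen equivalence then yields the required compatibility $n_X \circ C^*(n_f) \sim A_{PL}(f) \circ n_Y$. The main obstacle is the minimization step itself: at each inductive stage one must ensure that the chosen change of basis kills the linear part of $d$ in the current degree without disturbing the minimality achieved in lower degrees, which requires careful bookkeeping with the bracket-length grading on $\mathbb{L}_U$. Once a minimal free Lie model has been constructed, both the uniqueness argument and the existence of a Lie representative follow from standard cofibrancy-based lifting patterns in the model-category framework on $\textbf{DGL}$.
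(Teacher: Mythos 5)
This proposition is quoted from \cite{felix2012rational}; the paper itself gives no proof, so your proposal can only be judged on its own terms. Your overall strategy --- pass to a free Lie model via $\lambda$, minimize it by splitting off a contractible piece, and use cofibrancy of free $dgl$'s in the projective model structure to realize homotopy classes by honest morphisms --- is the standard route, and the existence and Lie-representative parts are essentially sound. (One bookkeeping slip: since $d_0$ has degree $-1$, the summand you write as $d_0U_n^{(0)}$ lives in degree $n-1$, so the splitting should read $U_n=U_n^{(1)}\oplus C_n\oplus d_0C_{n+1}$ with $C_n$ a complement of the cycles in $U_n$.)

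The genuine gap is in the uniqueness argument, where you assert that ``a bracket-length spectral sequence argument shows that $\bar{\phi}$ is an isomorphism.'' The bracket-length filtration cannot deliver this: for a minimal free $dgl$ the differential raises bracket length, so the $E_0$-differential vanishes and the filtration spectral sequence only relates $\mathbb{L}_V$ to $H_*(\mathbb{L}_V,d)$ at $E_\infty$; comparison theorems for spectral sequences propagate isomorphisms from an early page to the abutment, not backwards, so an isomorphism on total homology gives no control over $\bar{\phi}\colon V\to V'$. Indeed $H_*(\mathbb{L}_V,d)\cong\pi_*(\Omega X)$ is in general far larger than $V$, so $V$ is simply not visible in the homology of the Lie algebra itself. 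The correct argument applies the functor $C_*$: for connected $dgl$'s, $C_*$ takes quasi-isomorphisms to quasi-isomorphisms, and for a minimal free $dgl$ there is a quasi-isomorphism $C_*(\mathbb{L}_V,d)\to(\mathbb{Q}\oplus sV,0)$ (the map the paper records in Proposition \ref{14}), so a quasi-isomorphism $\phi$ induces an isomorphism $\mathbb{Q}\oplus sV\to\mathbb{Q}\oplus sV'$ which is exactly $\mathrm{id}\oplus s\bar{\phi}$. With that repaired, your bracket-length filtration argument does correctly upgrade the linear isomorphism to an isomorphism of free $dgl$'s. A smaller point of the same kind occurs in the existence step: ``quotient out the contractible sub-$dgl$'' should be ``project off a coproduct factor $(\mathbb{L}_{W\oplus dW},d)$ after the change of variables''; quotienting by the ideal generated by a contractible subspace is not in general a quasi-isomorphism, whereas the coproduct projection is, by the K\"unneth-type computation for free products of enveloping algebras.
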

    \vspace{0.3cm}
    In rational homotopy theory, the following theorem establishes a correspondence between differential graded Lie algebras and the rational homotopy types of simply connected spaces: 
    \vspace{0.3cm}
    \begin{theorem}{(Quillen's equivalence)}{\cite{quillen1969rational}}
        Every connected differential graded Lie algebra $(L,d_L)$ of finite type serves as a Lie model for a simply connected CW complex $X$ of finite rational type. 
        Furthermore, this association is unique: two such CW complexes are rationally homotopy equivalent if and only if their corresponding differential degraded Lie algebras are quasi-isomorphic. 
    \end{theorem}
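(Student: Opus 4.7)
The plan is to derive this theorem as a direct consequence of the categorical equivalence recorded in Theorem~\ref{11}, together with the link between the functor $C^{*}=\textbf{Hom}(C_{*}(-),\mathbb{Q})$ and the polynomial de Rham functor $A_{PL}$ that was mentioned just before the definition of a Lie model. The overall strategy is to transport the abstract equivalence $\lambda:\textbf{Ho}(\textbf{Top})\xrightarrow{\simeq}\textbf{Ho}(\textbf{DGL})$ into the concrete ``Lie model'' language of the definition, and then read off both claims.

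For the existence assertion, I would start from an arbitrary connected dgl $(L,d_L)$ of finite type and invoke Theorem~\ref{11} to produce a simply connected rational CW complex $X$ of finite type with $\lambda X\simeq (L,d_L)$ in $\textbf{Ho}(\textbf{DGL})$, i.e.\ a zig-zag of quasi-isomorphisms between them. Applying $C_{*}$ yields a zig-zag of quasi-isomorphisms in $\textbf{CDGC}$ (quasi-isomorphisms between connected dgl's are preserved by $C_{*}$, as noted right after the Cartan--Eilenberg--Chevalley construction). Dualising via $\textbf{Hom}(-,\mathbb{Q})$, and using finite type to ensure exactness, produces a zig-zag of quasi-isomorphisms of cdga's connecting $C^{*}(L,d_L)$ and $C^{*}(\lambda X)$. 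The remaining ingredient is the classical identification $C^{*}(\lambda X)\simeq A_{PL}(X)$ in $\textbf{Ho}(\textbf{CDGA})$, which is encoded in the commuting triangle of Quillen equivalences between $\textbf{Ho}(\textbf{Top})$, $\textbf{Ho}(\textbf{DGL})$, and the Sullivan side. Composing these two chains of quasi-isomorphisms and using cofibrant replacement to turn the zig-zag into a genuine quasi-isomorphism $C^{*}(L,d_L)\xrightarrow{\simeq}A_{PL}(X)$ exhibits $(L,d_L)$ as a Lie model for $X$ in the exact sense of the definition.

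For the second assertion I would argue in both directions, again using Theorem~\ref{11}. If $X\simeq_{\mathbb{Q}}Y$, then $\lambda X\simeq \lambda Y$ in $\textbf{Ho}(\textbf{DGL})$; any Lie models $(L,d_L)$ and $(E,d_E)$ for $X$ and $Y$ are weakly equivalent to $\lambda X$ and $\lambda Y$ respectively (by the previous paragraph applied in reverse), so they are quasi-isomorphic via a zig-zag in $\textbf{DGL}$. Conversely, given a quasi-isomorphism $(L,d_L)\simeq (E,d_E)$, applying $C^{*}$ and comparing with the quasi-isomorphisms $C^{*}(L,d_L)\simeq A_{PL}(X)$ and $C^{*}(E,d_E)\simeq A_{PL}(Y)$ yields $A_{PL}(X)\simeq A_{PL}(Y)$ in $\textbf{Ho}(\textbf{CDGA})$, and the Sullivan--de~Rham equivalence then promotes this to a rational homotopy equivalence $X\simeq_{\mathbb{Q}}Y$.

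The main obstacle, to my mind, is the bridge $C^{*}(\lambda X)\simeq A_{PL}(X)$: it is the single point where one crosses from the Lie-coalgebra side to the commutative-algebra side, and it is where the connectivity of $L$ and the finite-type assumption are actually used (to make $\textbf{Hom}(-,\mathbb{Q})$ exact and to ensure $C_{*}(L)$ lands in the 1-connected cdgc's on which Hinich's equivalence is stated). Once that bridge is in place, both the existence and the ``iff'' statement become formal consequences of Theorem~\ref{11} and of the two-out-of-three property of weak equivalences.
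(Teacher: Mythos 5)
The paper offers no proof of this statement: it is quoted verbatim as a classical theorem of Quillen, with only the citation to \cite{quillen1969rational} (the textbook reference is F\'elix--Halperin--Thomas, Theorem 24.7), so there is no in-paper argument to compare yours against. Your proposal is a correct and standard \emph{translation}: you deduce the Lie-model formulation from the categorical equivalence of Theorem~\ref{11} (essential surjectivity of $\lambda$ gives existence; full faithfulness plus two-out-of-three gives the ``if and only if''), using the Sullivan-algebra structure of $C^{*}(L,d_L)$ to straighten a zig-zag of quasi-isomorphisms into an honest map $C^{*}(L,d_L)\to A_{PL}(X)$, and using finite type to make $\textbf{Hom}(-,\mathbb{Q})$ exact. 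You also correctly isolate the one genuinely non-formal ingredient, the compatibility $C^{*}(\lambda X)\simeq A_{PL}(X)$ between the Quillen and Sullivan sides.

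The caveat you should be aware of is that this is not a proof of Quillen's theorem in any substantive sense, and it skirts circularity: Theorem~\ref{11} \emph{is} Quillen's theorem (its proof is the long chain of Quillen equivalences through simplicial sets, simplicial groups, completed Hopf algebras, etc.), and the bridge $C^{*}(\lambda X)\simeq A_{PL}(X)$ is itself a deep comparison result that the paper also does not prove. So all of the mathematical content is delegated to the two cited inputs, and what remains is bookkeeping. For a background theorem that the paper itself only cites, that is a perfectly acceptable level of detail, but you should present it as ``the Lie-model formulation follows formally from the categorical equivalence together with the Quillen--Sullivan comparison,'' rather than as an independent proof.
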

    \vspace{0.3cm}

\section{Persistence Minimal Free Lie Models}
\vspace{0.3cm}
\begin{definition}
    Let $\mathbb{X}:(\mathbb{R},\leq) \to \textbf{Top} $ be a rational $\mathbb{R}$-space. 
    The persistence Quillen model of $\mathbb{X}$ is the functor $\lambda \mathbb{X} : (\mathbb{R},\leq) \to \textbf{DGL}$ with $(\lambda \mathbb{X})_t := \lambda \mathbb{X}_t$. 
\end{definition}
\vspace{0.3cm}
Indeed, through Theorem\ref{11}, we can know that $\lambda$ induces a functor $\textbf{Ho}(\textbf{Top} )^{\mathbb{R}} \to \textbf{Ho}(\textbf{DGL})^{\mathbb{R}}$, since the morphism $\varphi$ in $\textbf{Ho}(\textbf{Top} )^{\mathbb{R}}$ is a set of $\{\varphi_a\}_{a\in \mathbb{R}}$, in which all $\varphi_a$ are morphisms in $\textbf{Ho}(\textbf{Top} )$ and $\lambda$ induces the functor from $\textbf{Ho}(\textbf{Top} )$ to $\textbf{Ho}(\textbf{DGL})$\cite{quillen1969rational}. 
\vspace{0.3cm}
\begin{definition}
    Let $\mathbb{X}:(\mathbb{R},\leq) \to \textbf{Top} $ be a rational $\mathbb{R}$-space. 
    The persistence minimal Quillen model of $\mathbb{X}$ is the functor $M_{Qui}(\mathbb{X}) : (\mathbb{R},\leq) \to \textbf{DGL}$ with $M_{Qui}(\mathbb{X})_t$ is the minimal Quillen model of $\mathbb{X}_t$, and for any $s\leq t$, $M_{Qui}(X)_{s\leq t}$ is a Lie representative of $\mathbb{X}_{s\leq t}$. 
\end{definition}
\vspace{0.3cm}
Note that the definition of persistence minimal Quillen model is not well defined because we cannot promise the equation $M_{Qui}(X)_{r\leq t}=M_{Qui}(X)_{s\leq t} \circ M_{Qui}(X)_{r\leq s}$. 
However, if we focus on the homotopy category of \textbf{DGL}, \textbf{Ho}(\textbf{DGL}), then the definition of the persistence minimal Quillen model is meaningful. 
\subsection{The Existence of Persistence Minimal Free Lie Models}
\vspace{0.3cm}
\begin{lemma}
    Let $n_X:C^*(\mathbb{L}_V) \to A_{PL}(X)$ and $n_Y:C^*(\mathbb{L}_W) \to A_{PL}(Y)$ be free Lie models of $X$ and $Y$ respectively. 
    For any continuous map $f: X \to Y$, the Lie representative $n_{f}: (\mathbb{L}_V,d) \to (\mathbb{L}_W,d)$ is unique up to weak equivalence. 
\end{lemma}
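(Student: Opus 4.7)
The plan is to reduce uniqueness of the Lie representative to the well-known uniqueness of Sullivan representatives of a map, and then transport the result back to $\textbf{DGL}$ through the Quillen equivalence of Theorem~\ref{11}. Let $\varphi_{1},\varphi_{2}\colon(\mathbb{L}_{V},d)\to(\mathbb{L}_{W},d)$ be two Lie representatives of $f$. The defining relations $n_{X}\,C^{*}(\varphi_{i})\sim A_{PL}(f)\,n_{Y}$, combined with the transitivity of CDGA homotopy, immediately yield
\[
n_{X}\,C^{*}(\varphi_{1})\;\sim\; n_{X}\,C^{*}(\varphi_{2}).
\]

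First I would exploit the observation recorded in Section~2 that $C^{*}(\mathbb{L}_{V})$ and $C^{*}(\mathbb{L}_{W})$, obtained by dualizing the Cartan--Eilenberg--Chevalley construction on connected free $dgl$'s, are Sullivan algebras and hence cofibrant in the Sullivan model structure on CDGA. Since $n_{X}\colon C^{*}(\mathbb{L}_{V})\to A_{PL}(X)$ is a CDGA quasi-isomorphism between fibrant objects, the standard lifting lemma for Sullivan algebras---equivalently, the fact that post-composition with a weak equivalence induces a bijection on homotopy classes of maps from a cofibrant source---forces $C^{*}(\varphi_{1})\sim C^{*}(\varphi_{2})$ as CDGA morphisms.

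Next I would transport this equivalence back to $\textbf{DGL}$. Both $(\mathbb{L}_{V},d)$ and $(\mathbb{L}_{W},d)$ are cofibrant-fibrant in $(\textbf{DGL})_{proj}$, since free Lie algebras are cofibrant and every object of $(\textbf{DGL})_{proj}$ is fibrant. Combining Hinich's Quillen equivalence $\mathcal{L}\dashv C_{*}$ with linear duality to shuttle between CDGC and CDGA, the functor $C^{*}$ becomes fully faithful on homotopy classes of maps between such cofibrant-fibrant objects; hence $C^{*}(\varphi_{1})\sim C^{*}(\varphi_{2})$ lifts to a DGL homotopy $\varphi_{1}\sim\varphi_{2}$, yielding the asserted uniqueness up to weak equivalence in $\textbf{Ho}(\textbf{DGL})$.

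The step I expect to be the main obstacle is the cancellation argument in the second paragraph: one must exhibit explicit path or cylinder objects in CDGA---typically via $(-)\otimes\Lambda(t,dt)$---and verify that the CDGA homotopy relating $n_{X}\,C^{*}(\varphi_{1})$ and $n_{X}\,C^{*}(\varphi_{2})$ can be factored along the weak equivalence $n_{X}$ so as to descend to an actual homotopy between $C^{*}(\varphi_{1})$ and $C^{*}(\varphi_{2})$ themselves. Once this descent is in place, the passage through Hinich's Quillen equivalence is essentially formal.
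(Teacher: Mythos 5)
Your proposal is correct and follows essentially the same route as the paper: reduce to the uniqueness of Sullivan representatives of $f$ (which you unpack via cofibrancy of $C^{*}(\mathbb{L}_{V})$ and the lifting property along the quasi-isomorphism $n_{X}$, where the paper simply cites that fact), and then transport $C^{*}(\varphi_{1})\sim C^{*}(\varphi_{2})$ back to $\textbf{Ho}(\textbf{DGL})$ through the equivalence of homotopy categories induced by $C^{*}$. The descent step you flag as the main obstacle is exactly the standard uniqueness-of-Sullivan-representatives argument the paper invokes, so no new difficulty arises there.
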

\begin{proof}
    Given the following diagram
    $$
    \xymatrix{
    A_{PL}(Y) \ar[r]^{A_{PL}(f)} & A_{PL}(X) \\
    C^*(\mathbb{L}_W) \ar[u]_{\simeq}^{n_Y} \ar[r]_{C^*(n_f)} & C^*(\mathbb{L}_V) \ar[u]_{n_X}^{\simeq} 
    }
    $$
    is commutative up to homotopy. 
    If there is another Lie representative of $f$, $m_f$, then $C^*(n_f) \sim C^*(m_f)$. 
    Because $C^*(\mathbb{L}_V)$ and $C^*(\mathbb{L}_W)$ are Sullivan models, $C^*(n_f)$ and $C^*(m_f)$ are two Sullivan representatives of $f$, $C^*(n_f) \sim C^*(m_f)$. 
        
    Note that $C_*: \textbf{Ho}(\textbf{DGL}) \to \textbf{Ho}(\textbf{CDGC})$ is a equivalence of categories, $C^*$ induces a equivalence of categories $\textbf{Ho}(\textbf{DGL}) \to \textbf{Ho}(\textbf{CDGA})$ and we still use $C^*$ to represent it. 
    What's more, we know that if two morphisms in \textbf{CDGA} are homotopic, then these two morphisms are equivalent in \textbf{Ho}(\textbf{CDGA}), which is the homotopy category of \textbf{CDGA}, where weak equivalences are quasi-isomorphisms. 

    Therefore $n_f=m_f$ in \textbf{Ho}(\textbf{DGL}). 
\end{proof}
\vspace{0.3cm}
So, for any morphisms $X \xrightarrow[]{f} Y \xrightarrow[]{g} Z$ in $\textbf{Top} $, we have proven that $n_g \circ n_f = n_{gf}$ in \textbf{Ho}(\textbf{DGL}), 
where $n_f:\mathbb{L}_U \to \mathbb{L}_V,\  n_g: \mathbb{L}_V \to \mathbb{L}_W,\  n_{gf}:\mathbb{L}_U \to \mathbb{L}_W$ are Lie representatives of $f, g, gf$ respectively, and $\mathbb{L}_U,\mathbb{L}_V,\mathbb{L}_W$ are minimal Quillen models of $X,Y,Z$ respectively. 
\vspace{0.3cm}
\begin{theorem}
    For any rational $\mathbb{R}$-space $\mathbb{X}: (\mathbb{R},\leq) \to \textbf{Top} $, there exists a persistence minimal Quillen model $M_{Qui}(\mathbb{X}):(\mathbb{R},\leq) \to \textbf{Ho}(\textbf{DGL})$ such that $M_{Qui}(\mathbb{X})_t$ is a minimal Quillen model of $\mathbb{X}_r$ and $M_{Qui}(\mathbb{X})_{s\leq t}$ is a Lie representative of $\mathbb{X}_{s \leq t}$ up to weak equivalences. 
\end{theorem}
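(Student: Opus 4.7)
The plan is to construct $M_{Qui}(\mathbb{X})$ pointwise, then promote the pointwise data to a functor valued in $\textbf{Ho}(\textbf{DGL})$ using the uniqueness-up-to-weak-equivalence established in the preceding lemma. First, for each $t\in\mathbb{R}$, I invoke the existence proposition for minimal Quillen models to choose a minimal free Lie model $(\mathbb{L}_{V_t},d_t)$ of $\mathbb{X}_t$, and set $M_{Qui}(\mathbb{X})_t:=(\mathbb{L}_{V_t},d_t)$. Next, for every pair $s\leq t$, the same proposition produces a Lie representative $n_{s,t}:(\mathbb{L}_{V_s},d_s)\to (\mathbb{L}_{V_t},d_t)$ of the continuous map $\mathbb{X}_{s\leq t}$; I declare $M_{Qui}(\mathbb{X})_{s\leq t}:=[n_{s,t}]$, its class in $\textbf{Ho}(\textbf{DGL})$. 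This gives the required object on objects and morphisms.

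The work to be done is to verify that these assignments respect composition and identities, that is, that we really obtain a functor $(\mathbb{R},\leq)\to\textbf{Ho}(\textbf{DGL})$. For composition, given $r\leq s\leq t$, the composite $n_{s,t}\circ n_{r,s}$ fits into the same homotopy-commutative square with $A_{PL}(\mathbb{X}_{r\leq t})=A_{PL}(\mathbb{X}_{s\leq t})\circ A_{PL}(\mathbb{X}_{r\leq s})$ that defines a Lie representative of $\mathbb{X}_{r\leq t}$; hence by the preceding lemma it coincides with $n_{r,t}$ in $\textbf{Ho}(\textbf{DGL})$, giving the functoriality relation $M_{Qui}(\mathbb{X})_{s\leq t}\circ M_{Qui}(\mathbb{X})_{r\leq s}=M_{Qui}(\mathbb{X})_{r\leq t}$. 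For identities, the morphism $\mathrm{id}_{(\mathbb{L}_{V_t},d_t)}$ is manifestly a Lie representative of $\mathrm{id}_{\mathbb{X}_t}=\mathbb{X}_{t\leq t}$, so again by the lemma $M_{Qui}(\mathbb{X})_{t\leq t}$ equals the identity in $\textbf{Ho}(\textbf{DGL})$.

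The main conceptual obstacle, and the reason the target category must be the homotopy category rather than $\textbf{DGL}$ itself, is exactly the point flagged in the remark preceding the theorem: strict equality $n_{s,t}\circ n_{r,s}=n_{r,t}$ in $\textbf{DGL}$ cannot in general be arranged by any coherent choice of representatives, since different Lie representatives are only homotopic, not equal. The preceding lemma dissolves this obstacle by showing that passing through $C^*$ to a Sullivan representative, where the standard homotopy theory of $\textbf{CDGA}$ applies, forces homotopic representatives to become equal after descending to $\textbf{Ho}(\textbf{DGL})$. Once this is in hand, no further coherence needs to be checked and the construction is essentially immediate; no compatibility beyond binary composition arises because the domain poset $(\mathbb{R},\leq)$ is thin, so two parallel chains of morphisms are automatically equal on the nose.
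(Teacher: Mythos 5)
Your proposal is correct and follows essentially the same route as the paper: the paper's own argument consists of the preceding uniqueness lemma together with the observation that $n_g\circ n_f=n_{gf}$ in $\textbf{Ho}(\textbf{DGL})$, which is exactly the functoriality check you carry out. The only point you assert without detail, as does the paper, is that the composite of two homotopy-commutative squares defining Lie representatives is again such a square, which is standard.
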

\vspace{0.3cm}

\subsection{The Stability of Persistence Minimal Free Lie Models}
For the persistence minimal Quillen model we construct, the post-composition of $H_*$ and $\pi_*$ computing the lower bound of the persistence minimal Quillen model $M_{Qui}(\mathbb{X})$ respectively, that is $H_*(M_{Qui}(\mathbb{X}))$ and $\pi_*(M_{Qui}(\mathbb{X}))$ are persistence modules that are functors from $(\mathbb{R,\leq}) \to \textbf{grVec} $. 
    Therefore, we can get the bound of persistence minimal Quillen models. 
    For any rational $\mathbb{R}$-space $\mathbb{X}$, we have the persistence minimal Quillen model $M_{Qui}(\mathbb{X})$. 
    Here, we assume that $Q$ is a map from free Lie algebras to vector spaces, $Q(\mathbb{L}_V)=V$. 
    Obviously, any morphism of free Lie algebras $\varphi:\mathbb{L}_V \to \mathbb{L}_W$ can induces a morphism of vector spaces $Q(\varphi): V \to W$ such that the diagram
    $$
    \xymatrix{
    \mathbb{L}_V \ar[d]_{Q} \ar[r]^{\varphi} & \mathbb{L}_W \ar[d]^{Q}\\
    V \ar[r]_{Q(\varphi)}& W
    }
    $$
    is commutative. 
    
    Given $f: \mathbb{X} \to \mathbb{Y}$, then we have commutative diagram
    $$
    \xymatrix{
    M_{Qui}(\mathbb{X}) \ar[r]^{n_f} \ar[d]_{Q} & M_{Qui}(\mathbb{Y}) \ar[d]^{Q}\\
    \mathbb{V} \ar[r]_{Q(n_f)} & \mathbb{W}
    }$$
    where $\mathbb{V}_r:=Q(M_{Qui}(\mathbb{X}_r))$ and $\mathbb{V}_{s\leq t} := Q(M_{Qui}(\mathbb{X})_{s \leq t})$. 
    \vspace{0.3cm}
    \begin{theorem}\label{12}
        For any rational $\mathbb{R}$-spaces $\mathbb{X}$ and $\mathbb{Y}$, we have 
        \begin{itemize}
            \item $d_{I}^{\textbf{Ho}(\textbf{DGL})}(M_{Qui}(\mathbb{X}),M_{Qui}(\mathbb{Y})) \leq d_{HI}(\mathbb{X},\mathbb{Y}) \leq d_{I}(\mathbb{X}, \mathbb{Y})$
            \item $\begin{array}{ll}
            d_I^{\textbf{grVec} }(\pi_*(\mathbb{X}), \pi_*(\mathbb{Y}) )&=d_I^{\textbf{grVec} }(H_* \circ M_{Qui}(\mathbb{X}),H_* \circ M_{Qui}(\mathbb{Y})) \\
            &\leq d_{I}^{\textbf{Ho}(\textbf{DGL})}(M_{Qui}(\mathbb{X}),M_{Qui}(\mathbb{Y}))
            \end{array}$
            \item $d_I^{\textbf{grVec} }(H_*(\mathbb{X}), H_*(\mathbb{Y}) )=d_I^{\textbf{grVec} }(\mathbb{V},\mathbb{W}) \leq d_{I}^{\textbf{Ho}(\textbf{DGL})}(M_{Qui}(\mathbb{X}),M_{Qui}(\mathbb{Y}))$
        \end{itemize}
    \end{theorem}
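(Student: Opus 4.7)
The plan is to treat the three bullet points in order, relying throughout on the functoriality principle of Proposition 2.1 (any functor $H$ sends $\delta$-interleavings to $\delta$-interleavings, hence $d_I(H\mathbb{X},H\mathbb{Y}) \leq d_I(\mathbb{X},\mathbb{Y})$) together with the algebraic dictionary from rational homotopy theory: $H_*(\mathbb{L}_V,d) \cong \pi_*(\Omega X;\mathbb{Q})$ and $V \cong s^{-1}\tilde{H}_*(X;\mathbb{Q})$ for the minimal Quillen model $(\mathbb{L}_V,d)$ of $X$.

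For the first bullet, I would first observe that the right inequality $d_{HI}(\mathbb{X},\mathbb{Y}) \leq d_I(\mathbb{X},\mathbb{Y})$ holds tautologically, because a strict $\delta$-interleaving is a homotopy $\delta$-interleaving in which the intermediate weak equivalences are taken to be identities. The left inequality is the substantive one. Starting from a homotopy $\delta$-interleaving witnessed by rational $\mathbb{R}$-spaces $\mathbb{X}',\mathbb{Y}'$ with pointwise weak equivalences $\mathbb{X}' \xrightarrow[]{\simeq} \mathbb{X}$, $\mathbb{Y}' \xrightarrow[]{\simeq} \mathbb{Y}$ and a strict $\delta$-interleaving between $\mathbb{X}'$ and $\mathbb{Y}'$, I would apply the Quillen functor $\lambda$ composed with the localization functor to $\textbf{Ho}(\textbf{DGL})$. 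By Theorem \ref{11}, this composition sends pointwise weak equivalences of rational $\mathbb{R}$-spaces to pointwise isomorphisms in $\textbf{Ho}(\textbf{DGL})^{\mathbb{R}}$, so $M_{Qui}(\mathbb{X}') \cong M_{Qui}(\mathbb{X})$ and $M_{Qui}(\mathbb{Y}') \cong M_{Qui}(\mathbb{Y})$; Proposition 2.1 then transports the strict $\delta$-interleaving of $\mathbb{X}',\mathbb{Y}'$ to a $\delta$-interleaving of $M_{Qui}(\mathbb{X}),M_{Qui}(\mathbb{Y})$, and taking infima gives the claim.

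For the second bullet, the inequality is a direct application of Proposition 2.1 to the functor $H_*: \textbf{Ho}(\textbf{DGL}) \to \textbf{grVec}$. The equality follows from the natural isomorphism $H_*(M_{Qui}(X)) \cong \pi_*(\Omega X;\mathbb{Q}) \cong s^{-1}\pi_*(X)$, which (by naturality of $M_{Qui}$ in the Lie representative of a continuous map) promotes to a natural isomorphism of persistence modules in $\textbf{grVec}$, up to the innocuous overall degree shift that leaves $d_I$ unchanged by the remarks after Definition 2.3. For the third bullet, the inequality follows from applying Proposition 2.1 to the indecomposables functor $Q$ on the image of $M_{Qui}$; the equality then reduces to the identification $V \cong s^{-1}\tilde{H}_*(X;\mathbb{Q})$ for the minimal Quillen model of $X$, which is natural in Lie representatives and therefore yields a natural isomorphism $\mathbb{V} \cong s^{-1}H_*(\mathbb{X})$ of persistence modules in $\textbf{grVec}$.

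The main obstacle will be the left inequality of the first bullet, specifically the passage from a homotopy interleaving of rational $\mathbb{R}$-spaces to a genuine interleaving in the homotopy category $\textbf{Ho}(\textbf{DGL})$. The delicate point is that $d_{HI}$ may be witnessed by a zigzag of pointwise weak equivalences rather than a single one, and one must check that after applying $\lambda$ and localizing, such a zigzag collapses to an honest isomorphism of functors $(\mathbb{R},\leq) \to \textbf{Ho}(\textbf{DGL})$; this reduces to the fact that the pointwise localization of a natural zigzag in a model-category-valued diagram is automatically a natural isomorphism in the pointwise homotopy category, which is immediate once the naturality squares are checked. A secondary subtlety is verifying that the algebraic identifications $H_*(M_{Qui}(\mathbb{X}_t)) \cong \pi_{*+1}(\mathbb{X}_t)$ and $Q(M_{Qui}(\mathbb{X}_t)) \cong s^{-1}\tilde{H}_*(\mathbb{X}_t)$ are natural in the persistence parameter $t$, which follows from the naturality of the Cartan--Eilenberg--Chevalley and Quillen constructions recalled in Section 2.2.
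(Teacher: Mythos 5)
Your proposal is correct and follows essentially the same route as the paper: the right inequality of the first bullet is immediate from the definition of $d_{HI}$, the left inequality comes from replacing $\mathbb{X},\mathbb{Y}$ by weakly equivalent $\mathbb{X}',\mathbb{Y}'$ realizing the homotopy interleaving and then applying $M_{Qui}$ (which inverts pointwise weak equivalences in $\textbf{Ho}(\textbf{DGL})$) together with the functoriality of interleavings, while the two equalities rest on the natural isomorphisms $H_*(L)\cong\pi_*(\Omega X)$ and $sH(V,d_V)\oplus\mathbb{Q}\cong H_*(X)$ exactly as in the paper's Lemma~\ref{13} and Proposition~\ref{14}. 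Your explicit attention to the degree shift $\pi_*(\Omega X)\cong s^{-1}\pi_*(X)$ and to the naturality of these identifications in the persistence parameter is slightly more careful than the paper's write-up, but does not constitute a different argument.
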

    \vspace{0.3cm}
    To prove the theorem, we need some extra results.
    \vspace{0.3cm}
    \begin{lemma}{\cite{felix2012rational}}{\label{13}}
        Let $(L,d)$ be a Lie model for $X \in \text{ob }\textbf{Top} $.
        There exists a natural isomorphism $H_*(L) \xrightarrow[]{\cong} \pi_*(\Omega X)$ of graded Lie algebras,
        which converts the Lie bracket in $H_*(L)$ to the Whitehead product in $\pi_*(X)$ up to sign. 
    \end{lemma}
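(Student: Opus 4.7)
The plan is to first establish the claim for Quillen's canonical model $\lambda X$ and then transfer it to an arbitrary Lie model $(L,d)$ using homotopy-theoretic uniqueness. Every Lie model of $X$ is, by definition, quasi-isomorphic through $C^*$ to $A_{PL}(X)$, and since the adjunction $\mathcal{L} \dashv C_*$ is a Quillen equivalence (Hinich), any two Lie models of $X$ are connected by a zigzag of quasi-isomorphisms in \textbf{DGL}. Consequently, it suffices to produce the graded Lie algebra isomorphism $H_*(\lambda X) \xrightarrow{\cong} \pi_*(\Omega X)$ for the distinguished model $\lambda X$, and then propagate it along these quasi-isomorphisms.

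For the canonical case, I would invoke the explicit construction of $\lambda X$ as the composite of a simplicial reduction with $\mathcal{L} \circ C_*$ applied to a simply connected model of $X$. Rational elements of $H_n(\lambda X)$ correspond to rational classes of maps $S^n \to \Omega X$, and the quadratic part $d_1$ of the Quillen differential, which by construction comes from the comultiplication in the underlying coalgebra, is designed so that the induced bracket on $H_*(\lambda X)$ matches the Samelson product on $\pi_*(\Omega X) \otimes \mathbb{Q}$. The Samelson product in turn corresponds to the Whitehead product on $\pi_*(X)$ under the loop-suspension adjunction $\pi_{n+1}(X) \cong \pi_n(\Omega X)$, and the sign discrepancy $(-1)^{\deg}$ in the statement arises precisely from commuting the suspension past the bracket.

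To transfer the isomorphism to an arbitrary Lie model $(L,d)$, I would observe that a quasi-isomorphism $\varphi : (L,d) \to (L',d')$ in \textbf{DGL} induces an isomorphism $H_*(\varphi)$ of graded vector spaces which respects the bracket: the Lie bracket is a chain map $L \otimes L \to L$, and the Künneth isomorphism $H_*(L \otimes L) \cong H_*(L) \otimes H_*(L)$ makes $H_*$ into a functor \textbf{DGL} $\to$ (graded Lie algebras). Composing the zigzag between $(L,d)$ and $\lambda X$ thus yields the required Lie algebra isomorphism $H_*(L) \cong \pi_*(\Omega X)$. Naturality in $X$ follows because, given $f : X \to Y$ with Lie models $(L_X, d_X)$ and $(L_Y, d_Y)$, the Lie representative $n_f$ constructed in the preceding lemma is unique in \textbf{Ho}(\textbf{DGL}), so the transfer commutes with $\lambda f$, and $H_*(\lambda f)$ agrees with $\pi_*(\Omega f)$ by the canonical case.

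The main obstacle is the explicit identification of the bracket on $H_*(\lambda X)$ with the Samelson product on $\pi_*(\Omega X)$. This is a genuinely geometric computation rather than a formal one: one must track how the diagonal approximation on chains (which drives the $d_1$ part of the Quillen differential) reflects the loop-space multiplication on $\Omega X$, and then verify that the resulting bracket corresponds under loop-suspension to the Whitehead product with the correct sign convention. In a self-contained write-up this identification would be imported from Quillen's original paper or from the Felix--Halperin--Thomas textbook, since everything else in the argument is a formal consequence of the Quillen equivalence and the functoriality of $H_*$ on \textbf{DGL}.
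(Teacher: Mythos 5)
The paper does not prove this lemma at all: it is imported verbatim from F\'elix--Halperin--Thomas (it is their theorem identifying the homotopy Lie algebra of a space with the homology of any of its Lie models), so there is no in-paper argument to compare yours against. Your outline is a faithful reconstruction of how the result is actually established in the literature: (i) prove it for the distinguished model $\lambda X$, where the quadratic part $d_1$ of the differential is engineered so that the induced bracket on $H_*(\lambda X)$ is the Samelson product on $\pi_*(\Omega X)$, which corresponds to the Whitehead product on $\pi_{*+1}(X)$ up to sign under the loop--suspension isomorphism; (ii) transfer to an arbitrary Lie model via a zigzag of DGL quasi-isomorphisms, using that $H_*$ sends such quasi-isomorphisms to isomorphisms of graded Lie algebras; (iii) get naturality from the uniqueness of Lie representatives in $\textbf{Ho}(\textbf{DGL})$. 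All three steps are sound, and you are right that step (ii) is formal once one knows $C^*$ reflects quasi-isomorphisms on connected DGLs of finite type.

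The only caveat is the one you state yourself: the decisive content --- that the bracket on $H_*(\lambda X)$ really is the Samelson product --- is not proved but deferred to Quillen or to F\'elix--Halperin--Thomas. So your proposal is a correct architecture for a proof rather than a proof; as it stands it has exactly the same epistemic status as the paper's bare citation, just with the reduction steps made explicit. If you wanted a genuinely self-contained argument you would need to carry out the geometric identification of $d_1$ with the loop-space multiplication, which is the hard part of Quillen's theorem and well beyond what this paper attempts.
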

    \vspace{0.3cm}
    For any free Lie algebra $(\mathbb{L}_V,d)$, let $d_V: V \to V$ be the linear part of the differential $d$, and $\bar{d}:sV \to sV$ be the suspension of $d_V$. 
    And for any continuous map $f:X\to Y$, respective free Lie models $(\mathbb{L}_V,d)$ and $(\mathbb{L}_W,d)$ of $X$ and $Y$, and a Lie representative $n_f$ of $f$, we have know that $sH(V,d_V)\oplus \mathbb{Q} \cong H_*(X)$\cite{felix2012rational} and consider the linear part of the Lie representative $n_f$, $Q(n_f): (sV\oplus\mathbb{Q},d_V) \to (sW\oplus\mathbb{Q},d_W)$. 
    
    We naturally pose the question: Is the morphism $H(Q(n_f))$ induced by $Q(n_f)$ ‘equal’ to the morphism $H_*(f)$ ? 
    The following lemma provides an answer to our question. 
    \vspace{0.3cm}
    \begin{proposition}{\label{14}}
        Suppose $(\mathbb{L}_V,d)$ is a free Lie model for $X$, then $sH(V,d_V)\oplus \mathbb{Q} \cong H_*(X)$ is a natural isomorphism of graded vector spaces.
        
        To be more detailed, we have the following commutative diagram. 
        $$
        \xymatrix{
        H_*(X) \ar[r]^{H_*(f)} \ar[d]_{\cong}^{} & H_*(Y) \ar[d]_{}^{\cong} \\
        sH(V,d_V)\oplus \mathbb{Q} \ar[r]_{H(Q(n_f))} & sH(W,d_W)\oplus \mathbb{Q}
        }
        $$
        Specially, if $(\mathbb{L}_V,d)$ is minimal, then $H_*(X)\cong sV\oplus \mathbb{Q}$. 
    \end{proposition}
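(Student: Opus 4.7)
The existence of the isomorphism $sH(V,d_V)\oplus\mathbb{Q}\cong H_{*}(X)$ is the content of the F\'elix-Halperin-Thomas result referenced just before the proposition, so the substantive claim is the naturality statement. My plan is to realize the isomorphism as the map on homology induced by a single natural chain map, from which the commutativity of the displayed square becomes a formal consequence of functoriality.

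First, since $(\mathbb{L}_V,d)$ is a Lie model of $X$, we have $C^{*}(\mathbb{L}_V,d)\simeq A_{PL}(X)$ and hence $H_{*}(C_{*}(\mathbb{L}_V,d))\cong H_{*}(X;\mathbb{Q})$, where $C_{*}(\mathbb{L}_V,d)=(\Lambda s\mathbb{L}_V,d_{0}+d_{1})$ is the Cartan-Eilenberg-Chevalley complex from Section~2. I define a natural projection
$$\pi_X\colon\Lambda s\mathbb{L}_V\longrightarrow sV\oplus\mathbb{Q}$$
by sending the constants $\mathbb{Q}$ identically, sending the wedge-length-one piece $s\mathbb{L}_V$ onto $sV$ through the suspension of the Lie abelianization $\mathbb{L}_V\twoheadrightarrow V=\mathbb{L}_V/[\mathbb{L}_V,\mathbb{L}_V]$, and sending $\Lambda^{\geq 2}s\mathbb{L}_V$ to zero. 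Equipping $sV\oplus\mathbb{Q}$ with the differential $-sd_V$, one verifies $\pi_X$ is a chain map. The key facts are: (i) for $v\in V$ the $V$-component of $d_L v$ is exactly $d_V v$; (ii) since $d_L$ is a Lie derivation, $d_L(\mathbb{L}_V^{\geq 2})\subseteq\mathbb{L}_V^{\geq 2}$, so $\pi_X$ kills $d_0$ applied to anything in $s\mathbb{L}_V^{\geq 2}$; and (iii) $d_{1}$ sends $\Lambda^{2}s\mathbb{L}_V$ into $s[\mathbb{L}_V,\mathbb{L}_V]\subseteq s\mathbb{L}_V^{\geq 2}$, again killed by $\pi_X$, while on $\Lambda^{\geq 3}$ both $d_0$ and $d_1$ land in $\Lambda^{\geq 2}$ and so are killed.

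Second, I claim $H_{*}(\pi_X)$ realizes the cited isomorphism: its codomain is tautologically $sH(V,d_V)\oplus\mathbb{Q}$, and its domain is $H_{*}(X)$. One sees it is an isomorphism either by directly invoking the F\'elix-Halperin-Thomas formula, or self-containedly by filtering $\Lambda s\mathbb{L}_V$ by wedge length and, after reducing to the minimal case via the standard splitting of any free Lie model as a minimal free Lie model plus a contractible extension, arguing that the resulting spectral sequence collapses onto $sH(V,d_V)\oplus\mathbb{Q}$.

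Third, for naturality, the Lie representative $n_f\colon(\mathbb{L}_V,d)\to(\mathbb{L}_W,d)$ induces $C_{*}(n_f)\colon\Lambda s\mathbb{L}_V\to\Lambda s\mathbb{L}_W$ which preserves wedge length. A direct inspection gives
$$\pi_Y\circ C_{*}(n_f)=\bigl(sQ(n_f)\oplus\mathrm{id}_{\mathbb{Q}}\bigr)\circ\pi_X,$$
since on $sx\in s\mathbb{L}_V$ both routes yield the suspension of the $W$-component of $n_f x$, and on $\Lambda^{\geq 2}$ both are zero. Passing to homology yields the lower square of the desired diagram; combining it with the commutative square $H_{*}(f)\circ H_{*}(n_X)=H_{*}(n_Y)\circ H_{*}(C_{*}(n_f))$ obtained from the Lie-representative condition $A_{PL}(f)\,n_X\simeq n_Y\,C^{*}(n_f)$ yields the full diagram of the proposition. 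The main obstacle is the identification of $H_{*}(\pi_X)$ with the F\'elix-Halperin-Thomas isomorphism; once that is in hand, the remaining steps are formal bookkeeping with the wedge-length decomposition.
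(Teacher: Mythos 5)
Your proposal is correct and follows essentially the same route as the paper's proof: both realize the isomorphism as the homology of the wedge-length/bracket-length projection $\Lambda s\mathbb{L}_V \to s\mathbb{L}_V\oplus\mathbb{Q} \to sV\oplus\mathbb{Q}$ from F\'elix--Halperin--Thomas, check its compatibility with $C_*(n_f)$ and $Q(n_f)$ at the chain level, and splice this with the homotopy-commutative square coming from the Lie-representative condition. Your explicit verification that the projection is a chain map is a welcome addition of detail that the paper leaves to the citation.
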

    \begin{proof}
        First, the morphism $C_*(\mathbb{L}_V,d) \xrightarrow[]{\simeq} A_{PL}(X)$ induces a cohomology isomorphism, that dualizes to an isomorphism $H_*(X) \xrightarrow[]{\cong} H_*(C_*(\mathbb{L}_V),d)$. 
        Given that $n_f$ is a Lie representative of $f:X \to Y$, then we have the following commutative diagram up to homotopy.
        $$
        \xymatrix{
        C^*(\mathbb{L}_V)\ar[d]_{n_X}&C^*(\mathbb{L}_W,d)\ar[d]^{n_Y} \ar[l]_{C^*(n_f)}\\
        A_{PL}(X)&A_{PL}(Y)\ar[l]^{A_{PL}(f)}
        }
        $$
        Thus the diagram
        $$
        \xymatrix{
        H(C^*(\mathbb{L}_V,d))\ar[d]_{\cong}&H(C^*(\mathbb{L}_W,d))\ar[d]^{\cong} \ar[l]_{H\circ C^*(f)}\\
        H(A_{PL}(X))\ar[d]_{\cong}&H(A_{PL}(Y))\ar[l]^{H\circ A_{PL}(f)}\ar[d]^{\cong}\\
        H^*(X) & H^*(Y)\ar[l]^{H^*(f)}
        }
        $$
        is commutative. 
        Then, we get the following commutative diagram. 
        $$
        \xymatrix{
        H(C^*(\mathbb{L}_V,d))\ar[r]^{H\circ C_*(n_f)}&H(C^*(\mathbb{L}_W,d))\\
        H_*(X)\ar[u] & H_*(Y)\ar[u]\ar[l]^{H_*(f)}
        }
        $$

        Note that in \cite{felix2012rational}, one provides a quasi-isomorphism $C_*(\mathbb{L}_V,d) \to (sV\oplus \mathbb{Q},\bar{d})$ for any free Lie algebra $(\mathbb{L}_V,d)$. 
        The quasi-isomorphism $C_*(\mathbb{L}_V,d) \to (sV\oplus \mathbb{Q},\bar{d})$ is 
        $$C_*(\mathbb{L}_V,d) = \Lambda s\mathbb{L}_V \to s\mathbb{L}_V\oplus\mathbb{Q} \to sV\oplus\mathbb{Q}, $$
        where the first morphism annihilates $\Lambda^{\geq 2}s\mathbb{L}_V$ and the second morphism annihilates $s\mathbb{L}_v^{(\geq 2)}$. 
        We have obviously the following commutative diagram
        $$
        \xymatrix{
        C_*(\mathbb{L}_V) \ar[d]^{=}\ar[r]^{C_*(n_f)} & C_*(\mathbb{L}_W)\ar[d]^{=}\\
        \Lambda s\mathbb{L}_V\ar[d] & \Lambda s\mathbb{L}_W\ar[d] \\
        s\mathbb{L}_V\oplus \mathbb{Q} \ar[d]\ar[r]^{n_f} & s\mathbb{L}_W \oplus \mathbb{Q}\ar[d]\\
        sV\oplus \mathbb{Q} \ar[r]^{Q(n_f)} & sW\oplus\mathbb{Q}
        }   
        $$
        So we eventually get the following commutative diagram, which shows that $H_*(X) \xrightarrow[]{\cong} sH(V,d_V)$ is natural. 

        It is also easy to prove that $sH(V,d_V)\oplus \mathbb{Q} \xrightarrow[]{\cong} H_*(X)$ is natural.
    \end{proof}
    \vspace{0.3cm}

    With the two lemmas established above, we can now readily proceed to prove my theorem. 
    \vspace{0.3cm}
    \begin{proof}{ of Theorem\ref{12}. }
        This inequality $d_{HI}(\mathbb{X},\mathbb{Y})\leq d_I(\mathbb{X},\mathbb{Y})$ is obvious and also an existing result. 
        Suppose $d_{HI}(X,Y)=\delta$, then there is persistence spaces $\mathbb{X}' \text{ and } \mathbb{Y}': (\mathbb{R},\leq) \to \textbf{Top} $ such that $\mathbb{X} \simeq \mathbb{X}'$, $\mathbb{Y} \simeq \mathbb{Y}'$, and $d_I(\mathbb{X}',\mathbb{Y}')=\delta$. 
        $$
        \xymatrix{
        &\bullet\ar[ld]_{\simeq}\ar[rd]^{\simeq}&\\
        \mathbb{X}&&\mathbb{X}'
        }
        \text{\quad   }
        \xymatrix{
        &\bullet\ar[ld]_{\simeq}\ar[rd]^{\simeq}&\\
        \mathbb{Y}'&&\mathbb{Y}
        }
        $$
        Consider their persistence minimal Quillen models in $\textbf{Ho}(DGL)$,
        $$
        \xymatrix{
        &\bullet\ar[ld]_{\cong}\ar[rd]^{\cong}&\\
        M_{Qui}(\mathbb{X})&&M_{Qui}(\mathbb{X}')
        }
        \text{\quad   }
        \xymatrix{
        &\bullet\ar[ld]_{\cong}\ar[rd]^{\cong}&\\
        M_{Qui}(\mathbb{Y}')&&M_{Qui}(\mathbb{Y})
        }
        $$
        where $M_{Qui}(\mathbb{X})$ is a object in category $\textbf{Ho}(DGL)^{\mathbb{R}}$, $M_{Qui}(\mathbb{X}')$, so are $M_{Qui}(\mathbb{Y})$, and $M_{Qui}(\mathbb{Y}')$.
        
        Suppose that $\mathbb{X}'$ and $\mathbb{Y}'$ are $(\delta+\epsilon)$-interleaved for any $\epsilon >0$, a $(\delta+\epsilon)$-interleaving between $\mathbb{X}'$ and $\mathbb{Y}'$ induces a $(\delta+\epsilon)$-interleaving between $M_{Qui}(\mathbb{X}')$ and $M_{Qui}(\mathbb{Y}')$. 
        Then $M_{Qui}(\mathbb{X})$ and $M_{Qui}(\mathbb{Y})$ are $(\delta+\epsilon)$-interleaved. 
        Thus we have proven that $d_I^{\textbf{Ho}(\textbf{DGL})}(M_{Qui}(\mathbb{X}),M_{Qui}(\mathbb{Y})) \leq d_{HI}(\mathbb{X},\mathbb{Y})$. 

        For the other two inequalities, $d_I^{\textbf{Vec} }(H_* \circ M_{Qui}(\mathbb{X}),H_* \circ M_{Qui}(\mathbb{Y})) \leq d_{I}^{\textbf{Ho}(\textbf{DGL})}(M_{Qui}(\mathbb{X}),M_{Qui}(\mathbb{Y}))$ and $d_I^{\textbf{Vec} }(\mathbb{V},\mathbb{W}) \leq d_{I}^{\textbf{Ho}(\textbf{DGL})}(M_{Qui}(\mathbb{X}),M_{Qui}(\mathbb{Y}))$ are obvious. 
        Lemma\ref{13} show that $d_I^{\textbf{Vec} }(\pi_*(\mathbb{X}), \pi_*(\mathbb{Y}) )=d_I^{\textbf{Vec} }(H_* \circ M_{Qui}(\mathbb{X}))$ and Proposition\ref{14} show that $d_I^{\textbf{Vec} }(H_*(\mathbb{X}), H_*(\mathbb{Y}) )=d_I^{\textbf{Vec} }(\mathbb{V},\mathbb{W})$. 
    \end{proof}
    \vspace{0.3cm}
    From the proof process, we can see that apart from proving $d_I^{\textbf{Vec} }(H_*(\mathbb{X}), H_*(\mathbb{Y}) )=d_I^{\textbf{Vec} }(\mathbb{V},\mathbb{W})$, we did not use the properties of the minimal Quillen model. Therefore, for any persistence free Lie model $\mathbb{L_V}$ and $\mathbb{L_W}$ of rational $\mathbb{R}$-spaces $\mathbb{X}$ and $\mathbb{Y}$ respectively, we have the following results: 
    \begin{itemize}
            \item $d_{I}^{\textbf{Ho}(\textbf{DGL})}(\mathbb{L_V},\mathbb{L_W}) \leq d_{HI}(\mathbb{X},\mathbb{Y})$, 
            \item $d_I^{\textbf{Vec} }(\pi_*(\mathbb{X}), \pi_*(\mathbb{Y}) )=d_I^{\textbf{Vec} }(H_* \circ \mathbb{L_V},H_* \circ \mathbb{L_W}) \leq d_{I}^{\textbf{Ho}(\textbf{DGL})}(\mathbb{L_V},\mathbb{L_W})$, 
            \item $d_I^{\textbf{Vec} }(H_*(\mathbb{X}), H_*(\mathbb{Y}) )\leq d_I^{\textbf{Vec} }(\mathbb{V},\mathbb{W}) \leq d_{I}^{\textbf{Ho}(\textbf{DGL})}(\mathbb{L_V},\mathbb{L_W}) $. 
    \end{itemize}
    What's more, we can prove easily that $d_I^{\textbf{Ho}(\textbf{Top} )}(\mathbb{X},\mathbb{Y})=d_{I}^{\textbf{Ho}(\textbf{DGL})}(\mathbb{L_V},\mathbb{L_W})$. 

    In persistent homology, the persistence free Lie models have some special advantages. 
    \vspace{0.3cm}
    \begin{example}
        Let $\mathbb{X}:(\mathbb{N},\leq)\to \textbf{Top} $ be the filtration of skeletons of CW complex $X$ satisfying $\mathbb{X}_r=X^r$ for $r \geq 2$ and $\mathbb{X}_0=\mathbb{X}_1=\emptyset$, where $X$ is a simply connected $CW$ complex so that $H_*(X;\mathbb{Q})$ is of finite type, and $X^r$ is the $r$-dim skeleton of $X$. 
        We know that $X^{r+1}=X^{r}\cup_{f_r}(\coprod_{\alpha} D_{\alpha}^{r+1})$, in which $f_r:=\coprod_{\alpha}f_{r,\alpha}:\coprod_{\alpha}\mathbb{S}_{\alpha}^{r} \to X^r$. 
        Next, we will construct a persistence free Lie model $Lie(\mathbb{X})$ for $\mathbb{X}$. 
        
        First, define $Lie(\mathbb{X})_0=Lie(\mathbb{X})_1=0$ and $Lie(\mathbb{X})_2=\lambda X^2$. 
        Suppose that we have got $Lie(\mathbb{X})_r$ which is a free Lie model of $X^r$, that is $n_r:C^*(Lie(\mathbb{X})_r) \xrightarrow[]{\simeq} A_{PL}(X^r)$ is a quasi-isomorphism. 
        
        Without loss of generality, we assume that $Lie(\mathbb{X})_r=\mathbb{L}_V$. 
        Because we have the isomorphism 
        $$\tau:sH(\mathbb{L}_V) \xrightarrow[]{\cong} \pi_*(X^r),$$
        then the classes $[f_{r,\alpha}] \in \pi_*(X^r)$ determine the classes $s[z_\alpha] = {\tau}^{-1} [f_{r,\alpha}] \in sH(\mathbb{L}_V)$, where $z_{\alpha} \in \mathbb{L}_V$ are cycles. 

        We define that $Lie(\mathbb{X})_{r+1}:=\mathbb{L}_{V\oplus W}$ and $d w_{\alpha}=z_{\alpha}$, in which $W$ is a graded vector space with basis $\{w_{\alpha}\}$ with $\text{deg }w_{\alpha} = r$. 
        We assert that $\mathbb{L}_{V\oplus W}$ is a free Lie model for $X^{r+1}$\cite{felix2012rational}. 
        Therefore, we define a free Lie model $Lie(\mathbb{X})$ for $\mathbb{X}$, denoted as $\mathbb{L_V}$ with $(\mathbb{L_V})_r=\mathbb{L}_{\mathbb{V}_r}=Lie(\mathbb{X})_r$, where $\mathbb{V}:(\mathbb{N},\leq) \to \textbf{Vec} $ is a persistence module and any morphism $\mathbb{V}_{s\leq t}$ is an embedding.
    \end{example}
    \vspace{0.3cm}
    
    In addition to constructing persistence Lie models, we can also consider the persistence versions of Lie-infinity models\cite{hinich2001dg, buijs2013rational} for rational spaces. 
    Lie-infinity algebras inherently align more closely with the homotopy theory of topological spaces than classical Lie algebras. 
    Indeed, while Quillen’s construction provides a Lie-infinity model for a rational space $X$, bridging the gap to establish persistence Lie-infinity models and discuss their stability properties remains an open challenge. 
    In fact, although Quillen's construction provides a Lie-infinity model for a rational space $X$, we still need a little work to overcome the difficulties if we consider persistence Lie infinite models and the stability of persistence Lie-infinity models. 
    And if we can construct minimal Lie-infinity models\cite{kontsevich2003deformation} for rational $\mathbb{R}$-spaces and prove that this construction satisfies functoriality, then I believe this model will have a unique advantage in theory and application of persistence modules.

\section*{Acknowledgments}
I am grateful to all those who have provided assistance in achieving these results.

\bibliographystyle{plain}

\bibliography{reference}

\end{document}